\newtheorem{theorem}{Theorem}[section]
\newtheorem{lemma}[theorem]{Lemma}
\newtheorem{proposition}[theorem]{Proposition}
\newtheorem{corollary}[theorem]{Corollary}
\theoremstyle{definition}
\newtheorem{example}[theorem]{Example}
\newtheorem{remark}[theorem]{Remark}
\def\C{{\mathbb C}}
\def\Z{{\mathbb Z}}
\def\R{{\mathbb R}}
\def\P{{\mathbb P}}
\def\Pic{\operatorname{Pic}}
\def\rk{\mbox{rank}}
\def\tr{\mbox{tr}}
\def\Fix{\mbox{Fix}}
\begin{document}

\title{Order $9$ automorphisms of K3 surfaces}

\author{Michela Artebani}
\address{
Departamento de Matem\'atica, \newline
Universidad de Concepci\'on, \newline
Casilla 160-C,
Concepci\'on, Chile}
\email{martebani@udec.cl}

\author{Paola Comparin}
\address{
Departamento de Matem\'atica y Estad\'istica, \newline
Universidad de la Frontera, \newline
Temuco, Chile}
\email{paola.comparin@ufrontera.cl}

\author{Mar\'ia Elisa Vald\'es}
\address{
Departamento de Matem\'atica, \newline
Universidad de Concepci\'on, \newline
Casilla 160-C,
Concepci\'on, Chile}
\email{mariaevaldes@udec.cl}

\subjclass[2000]{Primary 14J28; Secondary 14J50, 14J10}
\keywords{K3 surfaces, automorphisms} 
\thanks{The first and last author have been partially 
supported by Proyecto FONDECYT Regular 
N. 1160897 and Proyecto Anillo CONICYT PIA  ACT 1415. 
The second author has been partially supported by  
Proyecto FONDECYT Postdoctorado N. 3150015 and
Proyecto Anillo ACT 1415 PIA CONICYT}

\begin{abstract}
In this paper we provide a complete classification of 
non-symplectic automorphisms of order nine 
on complex K3 surfaces.
\end{abstract}
\maketitle

\section*{Introduction}
An automorphism of order $n$ of a complex projective K3 surface is called {\em non-symplectic} 
if its action on the vector space of holomorphic $2$-forms is non-trivial,
 {\em purely non-symplectic} if such action has order $n$. 
By \cite[Theorem 0.1]{Nikulin} the rank of the transcendental lattice of a K3 surface carrying a  purely non-symplectic 
automorphism of order $n$ is divisible by the Euler's function of $n$.
This implies that $\varphi(n)\leq 21$, and all positive integers $n\not=60$ with such property 
occur as orders of purely non-symplectic automorphisms by \cite[Main Theorem 3]{MO}.   
A  classification of purely non-symplectic automorphisms 
is known for all prime orders  \cite{Nikulin, OZ1, OZ2, V, OZ3, Ko, ArtebaniSarti, ArtebaniSartiTaki},
when $\varphi(n)=20$ \cite{MO}, when the automorphism acts trivially on the N\'eron-Severi lattice 
and $\varphi(n)$ equals the rank of the transcendental lattice \cite{V, Ko, OZ3, Schuett}, 
for orders $6,16$ \cite{Dillies, TabbaDimaSarti} 
and $4,8$ \cite{ArtebaniSarti4, TabbaaSarti} (the latter contain partial classifications).

In the present paper we classify non-symplectic automorphisms of order $9$, 
completing previous work by Taki \cite{Taki}, who studied the case when 
the automorphism acts trivially on the N\'eron-Severi lattice. 
The main result is the following.

\begin{theorem}\label{main}
Let $X$ be a complex K3 surface, $\sigma$ be a non-symplectic automorphism of $X$ of order nine and $\tau=\sigma^3$.
Then 
\begin{enumerate}
\item $\sigma$ is purely non-symplectic, i.e. $\tau$ is non-symplectic;
\item 
the topological structure of  $\Fix(\tau)$, $\Fix(\sigma)$, the ranks of the eigenspaces of $\sigma^*$ and $\tau^*$ in $H^2(X,\C)$ 
and the invariant lattice of $\tau$ are described in Table \ref{tabla1} (see Section \ref{section1} for the notation). 
\end{enumerate}
All configurations described in Table \ref{tabla1} exist.
\end{theorem}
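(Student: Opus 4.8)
The plan is to take the known classification of non-symplectic automorphisms of order $3$ (as in \cite{ArtebaniSartiTaki}) as the backbone for $\tau=\sigma^3$, and then to recover the finer data attached to $\sigma$ from the relation $(\sigma^*)^3=\tau^*$ together with the holomorphic and topological Lefschetz fixed point formulas. Throughout, fix a holomorphic $2$-form $\omega_X$ and write $\sigma^*\omega_X=\lambda\,\omega_X$; non-symplecticity means $\lambda\neq 1$, and since $\sigma^9=\id$ the order $d$ of $\lambda$ lies in $\{3,9\}$. The whole content of part (i) is that $d=9$.

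For part (i) I would argue by contradiction, assuming $d=3$. Then $\tau=\sigma^3$ fixes $\omega_X$, so $\tau$ is symplectic. Since a non-symplectic automorphism acts on its transcendental lattice $T_X$ with minimal polynomial $\Phi_d$, the assumption $d=3$ gives $(\sigma^*)^2+\sigma^*+\id=0$ on $T_X$, hence $\tau^*=(\sigma^*)^3=\id$ on $T_X$: thus $\tau$ is a symplectic automorphism of order $3$. By Nikulin's description such a $\tau$ has $\Fix(\tau)$ equal to six isolated points. As any $\sigma$-fixed point is a $\tau$-fixed point, $\Fix(\sigma)\subseteq\Fix(\tau)$ is finite, so $\sigma$ has no fixed curve and every $\sigma$-fixed point is isolated with tangent eigenvalues $\alpha_p,\beta_p$ satisfying $\alpha_p\beta_p=\lambda$ and $\{\alpha_p^3,\beta_p^3\}=\{\zeta_3,\zeta_3^2\}$. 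Feeding this into the holomorphic Lefschetz formula, whose left-hand side is $1+\bar\lambda$ and whose right-hand side is $\sum_{p}\frac{1}{(1-\alpha_p)(1-\beta_p)}$ over the at most six isolated fixed points, and cross-checking against the topological Lefschetz formula $\chi(\Fix(\sigma))=2+\tr(\sigma^*|_{H^2(X,\C)})$, I expect the admissible local eigenvalue patterns to be too few to balance the equation, yielding the required contradiction and forcing $d=9$.

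For part (ii), now that $\tau=\sigma^3$ is non-symplectic of order $3$, I would first list, via the order-$3$ classification, the possible topological types of $\Fix(\tau)$ (disjoint unions of points and smooth curves), the invariant lattice $S(\tau)=H^2(X,\Z)^{\tau^*}$, and the eigenspace ranks of $\tau^*$ on $H^2(X,\C)$. To descend to $\sigma$, note that integrality of the order-$9$ isometry $\sigma^*$ forces the six primitive-$9$th-root eigenspaces $V_{\zeta_9^{j}}$ (with $3\nmid j$) to have a common dimension, so the $\Phi_9$-part is a multiple of $6$; the eigenspaces indexed by $3\mid j$ recombine into the already-known $\tau^*$-eigenspaces. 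This determines the eigenspace ranks of $\sigma^*$. For $\Fix(\sigma)$ I would again use $\Fix(\sigma)\subseteq\Fix(\tau)$ and the local geometry: on a $\tau$-fixed curve $C$, $\sigma$ either permutes $C$ in an orbit of three (contributing no $\sigma$-fixed points there) or satisfies $\sigma(C)=C$, in which case $\sigma|_C$ has order dividing $3$ and its isolated fixed points on $C$ are exactly the $\sigma$-fixed points lying on $C$; the $\tau$-fixed points are distributed into $\sigma$-orbits of size $1$ or $3$. Combining this with the topological and holomorphic Lefschetz formulas for $\sigma$ then pins down $\Fix(\sigma)$ case by case, producing each row of Table \ref{tabla1}.

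For part (iii) I would realize every configuration by an explicit K3 surface carrying an order-$9$ automorphism, drawing on elliptic models $y^2=x^3+a(t)x+b(t)$ with automorphisms acting diagonally by powers of $\zeta_9$ on $(x,y,t)$, on weighted-projective hypersurfaces with a diagonal order-$9$ symmetry, and on suitable cyclic covers; where no convenient equation is available I would instead invoke the Torelli-type existence criterion, building an order-$9$ isometry of the K3 lattice with the prescribed invariant sublattice and eigenvalue multiplicities and checking the signature and discriminant-form conditions that guarantee it is induced by an automorphism of a genuine K3 surface with the required Hodge structure. The main obstacle I anticipate is precisely this realization step together with exhaustiveness: ensuring simultaneously that each numerically admissible entry in the table is actually attained and that the Lefschetz and lattice constraints of part (ii) leave no non-realizable phantom cases.
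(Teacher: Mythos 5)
Your part (i) coincides with the paper's Lemma \ref{tauns}: assuming $\tau$ symplectic, every $\sigma$-fixed point is isolated of type $A_{1,2}$, $A_{4,8}$ or $A_{5,7}$, and the holomorphic Lefschetz formula for $\sigma$ is inconsistent; you only ``expect'' the inconsistency, but the verification is the same finite computation the paper performs, so this part is sound (your detour through the transcendental lattice to see that $\tau$ is symplectic is unnecessary, since $\lambda^3=1$ gives it directly). Your eigenspace count (the six eigenspaces for primitive ninth roots of unity share a common dimension) is also the paper's remark that $2m$ is divisible by $\varphi(9)=6$. The genuine gap is in part (ii): it is false that the two Lefschetz formulas together with the orbit analysis on $\Fix(\tau)$ ``pin down $\Fix(\sigma)$ case by case''. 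The numerical system \eqref{star} of Lemma \ref{lemaA} admits solutions that are geometrically impossible, and excluding them is where most of the paper's work lies. A concrete phantom: for $\tau$ of type $(n,k,g)=(4,3,2)$ (case E), the configuration in which $\sigma$ pointwise fixes the genus-$2$ curve $C$ and one of the two rational curves has $\alpha=0$, $(a_{2,8},a_{3,7},a_{4,6},a_{5,5})=(1,1,1,0)$ and $(r,l)=(4,3)$, and it satisfies both the holomorphic and the topological Lefschetz formulas; the paper excludes it (Proposition \ref{lemaB}) only by a projective argument, namely that $|C|$ realizes $X$ as a double cover of $\P^2$ branched along a reduced sextic with simple singularities, that $\sigma$ would induce an order-nine projectivity of $\P^2$ fixing a line pointwise, and that no invariant such sextic exists. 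Nothing in your outline can detect this.

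The same problem recurs throughout the case analysis. The splitting of case D into D1--D4 and the determination of case H require Proposition \ref{elliptic}: one must study the $\sigma$-invariant elliptic fibration defined by the fixed elliptic curve, identify the singular fiber supporting the rest of $\Fix(\tau)$ (type $I_0^*$, resp.\ $I_9^*$), and use Lemma \ref{tree} on trees of rational curves to locate the fixed points. In case B, the numerically admissible solution with $a_{4,6}=1$ is excluded only because the genus-$4$ curve is hyperelliptic and an order-$3$ automorphism of a hyperelliptic curve has $2$, $3$ or $4$ fixed points (Lemma \ref{hyp}); Riemann--Hurwitz alone does not suffice, since it permits a free action on a genus-$4$ curve. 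For part (iii), the paper proceeds entirely by explicit families (invariant quartics in $\P^3$ and their degenerations, Weierstrass fibrations $y^2=x^3+t(t^3-a)(t^3-b)(t^3-c)$ and specializations, and for case G2 a twist of $\sigma$ by translation by a $3$-torsion section). Your lattice-theoretic fallback is a genuinely different route, but incomplete as stated: beyond effectivity (the isometry must preserve an ample chamber), the lattice invariants do not determine the fixed locus --- the paper's own Table \ref{tabla1} shows that cases D2 and D4 share $L^\tau=U\oplus A_2^4$ and $(r,l)=(4,3)$ yet have different $\Fix(\sigma)$ --- so recovering $(n_\sigma,k_\sigma,g_\sigma)$ from the isometry would send you back to part (ii), which is precisely where your argument is underdetermined.
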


The irreducible components of the moduli space of K3 surfaces 
carrying a purely non-symplectic automorphism 
of order $n>2$ are known to be complex ball quotients \cite{DK}.
In some cases these spaces have interesting interpretations 
as moduli spaces of other geometric objects \cite[\S 12]{DK}.
When $n=9$ we show that the moduli space has three $2$-dimensional components
and each of them is birational to a moduli space of curves with cyclic automorphisms 
of order three.

\begin{table}[h]
\begin{center}
\begin{tabular}{cccc||ccc}
\multicolumn{4}{c}{}\\
&&$\tau$&&&$\sigma$&\\
\hline\hline
Case  & $(n,k,g)$ & $m$ & $L^\tau$ & Case &$(n_\sigma,k_\sigma,g_\sigma)$&$(r,l)$\\
\hline\hline
\multirow{2}{*}{A}&\multirow{2}{*}{(1,1,3)}&\multirow{2}{*}{$9$} & \multirow{2}{*}{$U(3)\oplus A_2$}
     &A1&$(6,0,-)$&(4,0)\\
&&&&A2&$(3,0,-)$&(2,1)\\
\hline\hline

\multirow{1}{*}{B}&\multirow{1}{*}{(1,2,4)}&$9$ & \multirow{1}{*}{$U\oplus A_2$}
&B&$(6,0,-)$& (4,0)\\
  
\hline\hline

\multirow{1}{*}{C}&\multirow{1}{*}{(4,1,0)}&$6$ & \multirow{1}{*}{$U(3)\oplus A_2^4$}
&C&$(3,0,-)$&(4,3)\\

\hline\hline

\multirow{4}{*}{D}&\multirow{4}{*}{(4,2,1)}&\multirow{4}{*}{$6$} & \multirow{4}{*}{$U\oplus A_2^4$}
     &D1&$(7,1,0)$& (8,1)\\
&&&&D2&$(3,1,1)$&(4,3)\\
&&&&D3&$(6,0,-)$&(6,2)\\
&&&&D4&$(3,0,-)$&(4,3)\\
\hline\hline

\multirow{1}{*}{E}&\multirow{1}{*}{(4,3,2)}&\multirow{1}{*}{$6$} & \multirow{1}{*}{$U\oplus E_6\oplus A_2$}
    &E&$(10,1,0)$& (10,0)\\
 
\hline\hline

\multirow{1}{*}{F}&\multirow{1}{*}{(4,4,3)}&\multirow{1}{*}{$6$} & \multirow{1}{*}{$U\oplus E_8$}
    &F&$(10,1,0)$& (10,0)\\
\hline\hline

\multirow{2}{*}{G}&\multirow{2}{*}{(7,4,0)}&\multirow{2}{*}{$3$}& \multirow{2}{*}{$U\oplus E_6^2\oplus A_2$}
&G1&$(10,1,0)$&(12,2)\\  
 &  &&&G2&$(3,0,-)$&(6,5) \\

\hline\hline

\multirow{1}{*}{H}&\multirow{1}{*}{(7,5,1)}&\multirow{1}{*}{$3$} & \multirow{1}{*}{$U\oplus E_6\oplus E_8$}
    &H&$(14,2,0)$& (16,0)\\
\ 

\end{tabular}
{\small{\caption{Classification of automorphisms of order $9$ of K3 surfaces}\label{tabla1} }}
\end{center}
\end{table}

\section{Preliminaries} \label{section1}

\subsection{Non symplectic automorphisms}
We start recalling some known results about non-symplectic automorphisms 
on complex $K3$ surfaces. We will denote by $\zeta_n$ 
 a primitive $n$-th root of unity.
Given an automorphism $f$ of $X$, its fixed locus is
\[
\Fix(f)=\{x\in X: f(x)=x\}. 
\]
The following result holds for any automorphism, symplectic or not (see \cite[\S 5]{Nikulin}).
\begin{lemma}\label{diag}
Let $X$ be a $K3$ surface, $f$ be an automorphism of order $n$ on $X$ 
such that $f^*(\omega_X)=\zeta_n^k\omega_X$, where $k\in \{1,\dots,n\}$,
and $p\in X$ be a fixed point for $f$. 
Then $f$  can be locally linearized and diagonalized in a neighborhood of $p$ 
so that is given by a matrix of the form:
\[
\begin{array}{ccc}
A_{i,j}=\begin{pmatrix}
\zeta^{i}&0\\
0&\zeta^j
\end{pmatrix}, &\mbox{with } i+j\equiv k\ ({\rm mod}\, n),\ 1\leq i,j\leq n.
\end{array}
\]
\end{lemma}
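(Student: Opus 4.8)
The plan is to reduce the statement to a purely linear problem in a neighbourhood of $p$, and then read off both the diagonal form and the constraint on the eigenvalues from the action of $f$ on the holomorphic $2$-form.

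First I would linearize $f$ at $p$. Since $f$ has finite order $n$ and fixes $p$, its differential $A:=df_p\in \mathrm{GL}(T_pX)$ satisfies $A^n=d(f^n)_p=\id$. Fixing local holomorphic coordinates centered at $p$, so that the chart is identified with a neighbourhood of $0$ in $\C^2$ and $A\in\mathrm{GL}_2(\C)$, I would introduce the averaged map
\[
\phi(z)=\frac{1}{n}\sum_{m=0}^{n-1}A^{-m}f^m(z),
\]
which is the classical Cartan--Bochner linearization device. A direct computation, using $f^n=\id$ and $A^n=\id$ to reindex the sum, shows that $\phi\circ f=A\circ\phi$; moreover $d\phi_p=\frac{1}{n}\sum_{m}A^{-m}A^m=\id$, so $\phi$ is a local biholomorphism. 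Hence in the new coordinates $w=\phi(z)$ the automorphism $f$ acts by the \emph{linear} map $A$.

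Next I would diagonalize $A$. Because $A^n=\id$, its minimal polynomial divides $t^n-1$, which has distinct roots over $\C$; therefore $A$ is diagonalizable and all its eigenvalues are $n$-th roots of unity. Performing a further linear change to an eigenbasis $(x,y)$, the map $f$ acquires the diagonal form $\mathrm{diag}(\zeta^{i},\zeta^{j})$ for suitable integers $i,j\in\{1,\dots,n\}$, where $\zeta=\zeta_n$.

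Finally I would pin down the congruence $i+j\equiv k\pmod n$ from the hypothesis $f^*(\omega_X)=\zeta^k\omega_X$. Since $\omega_X$ is a nowhere-vanishing holomorphic $2$-form, locally it equals a non-vanishing unit times $dx\wedge dy$, and in the diagonalizing coordinates $f^*(dx\wedge dy)=(\zeta^{i}dx)\wedge(\zeta^{j}dy)=\zeta^{i+j}\,dx\wedge dy$. Comparing with $f^*(\omega_X)=\zeta^k\omega_X$ forces $\zeta^{i+j}=\zeta^k$, that is $i+j\equiv k\pmod n$, which yields precisely the matrix $A_{i,j}$ in the statement. The one genuinely delicate point is the linearization step: one must verify carefully that the averaging map $\phi$ intertwines $f$ with its linear part $A$ and is invertible near $p$; once this is in place, the diagonalization and the determinant computation are formal.
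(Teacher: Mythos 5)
Your proof is correct, and it coincides with the standard argument behind this statement: the paper itself gives no proof, citing Nikulin (\S 5), and the Cartan averaging linearization, diagonalizability from the minimal polynomial dividing $t^n-1$, and the determinant computation on the $2$-form are exactly the ingredients of that classical proof. The only point worth tightening is the last step: writing $\omega_X=u\,dx\wedge dy$ with $u$ a nonvanishing holomorphic function, the identity $f^*\omega_X=\zeta^k\omega_X$ reads $(u\circ f)\,\zeta^{i+j}=\zeta^k u$, and you should evaluate at the fixed point $p$ (where $u(f(p))=u(p)\neq 0$) to conclude $\zeta^{i+j}=\zeta^k$, since $u\circ f\neq u$ in general.
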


It follows from this result that the fixed locus of $f$
is the disjoint union of smooth curves and isolated points.
Moreover, by Hodge index theorem, $\Fix(f)$ contains 
at most one curve of positive genus and at most two disjoint 
curves of genus $1$.

We also recall the following useful result, which tells how a 
non-symplectic automorphism acts on a tree of rational curves 
(see for example \cite[Lemma 4]{ArtebaniSarti4}). 

\begin{lemma}\label{tree}
Let $T =\sum_iR_i$ be a tree of smooth rational curves on a $K3$ surface 
$X$ such that each $R_i$ is invariant under the action of a purely non-symplectic automorphism 
$f$ of order $n$. Then, the points of intersection of the rational curves 
$R_i$ are fixed by $f$ and the action at one fixed point determines the action on the whole tree.
If  $f^*(\omega_X)=\zeta_n\omega_X$, then the sequence of fixed points is either 
\[\ldots\  A_{1,0},\ A_{1,0},\ A_{2,n-1},\ldots A_{\frac{n}2,\frac{n}2+1},\ A_{\frac{n}2,\frac{n}2+1},\ \ldots\quad \text{if n is even, or}
\]
\[\ldots\ A_{1,0},\ A_{1,0},\ A_{2,n-1},\ldots A_{\frac{n+1}2,\frac{n+1}2},\ A_{n-1,n},\ \ldots\quad \text{if n is odd}.
\]
\end{lemma}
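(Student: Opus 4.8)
The plan is to combine the local linearization of Lemma~\ref{diag} with the elementary fact that a finite order automorphism of $\P^1$ has reciprocal derivatives at its two fixed points, and then to propagate the resulting local data along the tree by induction.

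First I would record that the intersection points are fixed. Since each $R_i$ is invariant, any intersection point $R_i\cap R_j$ is mapped to $f(R_i)\cap f(R_j)=R_i\cap R_j$; as the $R_i$ form a tree this intersection is a single transversal point, hence fixed by $f$. Thus every node of the tree is a fixed point, and by Lemma~\ref{diag}, with $k=1$ because $f^*\omega_X=\zeta_n\omega_X$, the linearized action there is diagonal with eigenvalues $\zeta_n^i,\zeta_n^j$ satisfying $i+j\equiv 1\pmod n$, the two eigendirections being exactly the tangent lines to the two branches of the tree through that node.

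Next I would analyze the restriction of $f$ to a single $R=R_i\cong\P^1$. A finite order automorphism of $\P^1$ is either the identity or has exactly two fixed points, so a curve meeting three or more others is forced to be pointwise fixed. If $R$ is not fixed pointwise, then in suitable affine coordinates $f|_R$ is $z\mapsto\lambda z$, with derivative $\lambda$ at one fixed point and $\lambda^{-1}$ at the other: the tangent eigenvalue along $R$ is $\zeta_n^a$ at one node and $\zeta_n^{-a}$ at the other (reciprocity). If $R$ is fixed pointwise, its tangent eigenvalue is $1=\zeta_n^0$ everywhere, so the determinant condition forces the transverse eigenvalue to be $\zeta_n$; every node on such a curve is of type $A_{1,0}$, and a pointwise fixed curve meeting the chain contributes its two feet as the consecutive $A_{1,0}$ entries. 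Combining reciprocity along a shared curve with the relation $i+j\equiv1$ at the adjacent node, a pair $(\zeta_n^a,\zeta_n^b)$ at one node determines the pair $A_{\,1+a,\,n-a}$ at the next; this recursion proves that the action at one node determines it on the whole connected tree, and started at an $A_{1,0}$ foot it yields $A_{1,0},A_{2,n-1},A_{3,n-2},\dots$ along each arm, with the distinguished exponent stepping through $1,2,3,\dots$, exactly as listed.

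Finally I would settle the even/odd dichotomy, which comes from where the exponent sequence first meets a self-reciprocal value $\zeta_n^a=\zeta_n^{-a}$. For $n$ even this is $a=n/2$, where the curve carries equal tangent eigenvalues at both ends, producing the repeated central entry $A_{n/2,\,n/2+1}$; for $n$ odd no nonzero exponent is self-reciprocal, and the arm instead peaks at the unique node whose two branch tangents share the eigenvalue $\zeta_n^{(n+1)/2}$, giving the single central entry $A_{(n+1)/2,(n+1)/2}$, after which reciprocity sends the exponents back down. The main obstacle I expect is precisely this bookkeeping at the turning nodes: reconciling reciprocity with the determinant constraint $i+j\equiv1$ to pin down the boundary entries and to confirm that the two parities reproduce the two displayed sequences rather than a shifted variant.
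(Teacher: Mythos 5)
Your proposal is correct and takes essentially the same approach as the paper's source for this statement: the paper gives no proof of Lemma \ref{tree}, citing instead \cite[Lemma 4]{ArtebaniSarti4}, and the argument there is exactly yours --- nodes are fixed because each $R_i$ is invariant, Lemma \ref{diag} gives the constraint $i+j\equiv 1 \pmod n$, the reciprocal derivatives at the two fixed points of a finite-order automorphism of $\P^1$ propagate the exponents along the chain, and pointwise-fixed curves account for the doubled $A_{1,0}$ entries. The turning-point bookkeeping you flag as a possible obstacle is in fact already settled by your recursion $(\zeta_n^a,\zeta_n^{1-a})\mapsto A_{1+a,\,n-a}$, which forces the repeated entry $A_{n/2,\,n/2+1}$ (self-reciprocal tangent eigenvalue) when $n$ is even and the single self-paired entry $A_{(n+1)/2,(n+1)/2}$ when $n$ is odd.
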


An automorphism $f$ induces an isometry $f^*$ on $H^2(X,\Z)$.
In what follows we will denote by $L^{f}$ the sublattice of $H^2(X,\Z)$ invariant for $f^*$.
Observe that, when $f$ is non-symplectic, $L^{f}$ is contained in $\Pic(X)$.

\subsection{Order nine}
Let $\sigma$ be a non-symplectic automorphism of order nine of a $K3$ surface $X$ and 
let $\tau=\sigma^3$. We will start proving that $\tau$ is non-symplectic.
In this section we will denote $\zeta=\zeta_9$.

\begin{lemma}\label{tauns}
Let $X$ be a $K3$ surface. If $\sigma$ is an order 9 non-symplectic automorphism of $X$, 
then $\tau=\sigma^3$ is non-symplectic.  
\end{lemma}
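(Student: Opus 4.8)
The plan is to prove the equivalent statement that $\sigma$ is purely non-symplectic. Writing $\sigma^*\omega_X=\zeta^k\omega_X$ with $\zeta=\zeta_9$ and $1\le k\le 8$, non-symplecticity of $\sigma$ gives $9\nmid k$, while $\tau^*\omega_X=\zeta^{3k}\omega_X$ is trivial exactly when $3\mid k$. So the only thing to rule out is $k\in\{3,6\}$, i.e. the case in which $\sigma$ acts on $\omega_X$ with order three and $\tau$ is symplectic of order three. Since $\sigma^{-1}$ is again a non-symplectic automorphism of order nine with $(\sigma^{-1})^3=\tau^{-1}$ and eigenvalue $\zeta^{-k}$, replacing $\sigma$ by $\sigma^{-1}$ interchanges $k=3$ and $k=6$; hence it suffices to exclude $k=3$.

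Assume then $\sigma^*\omega_X=\zeta^3\omega_X$, so that $\tau=\sigma^3$ is symplectic of order three. A symplectic automorphism has only isolated fixed points: along a fixed curve the tangent eigenvalue is $1$, so by Lemma \ref{diag} the symplectic (determinant one) condition would force the normal eigenvalue to be $1$ as well, making $\tau$ trivial near the curve. Since $\Fix(\sigma)\subseteq\Fix(\tau)$, the automorphism $\sigma$ also has only isolated fixed points, and at each such $p$ the symplectic condition forces $d\tau_p$ to have eigenvalues $\zeta_3,\zeta_3^2$. Combining this with Lemma \ref{diag} for $\sigma$ (local type $A_{i,j}$ with $i+j\equiv 3\ (\mathrm{mod}\,9)$) and the requirement that $(d\sigma_p)^3=d\tau_p$ be of exact order three, I expect the admissible local types of $\sigma$ to reduce to $A_{1,2}$, $A_{4,8}$ and $A_{5,7}$ (the types $A_{3,9}$ and $A_{6,6}$ being excluded because they would make $d\tau_p$ trivial).

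I would then feed this into the holomorphic Lefschetz fixed point formula for $\sigma$, whose left-hand side equals $1+\zeta^{-3}=1+\zeta_3^2$ and whose right-hand side is the sum over the isolated fixed points of the local contributions
\[
c_1=\frac{1}{(1-\zeta)(1-\zeta^2)},\qquad c_2=\frac{1}{(1-\zeta^4)(1-\zeta^8)},\qquad c_3=\frac{1}{(1-\zeta^5)(1-\zeta^7)}.
\]
The crucial observation is that the exponent substitution $\zeta\mapsto\zeta^4$ fixes $\zeta_3=\zeta^3$, hence generates $\mathrm{Gal}(\Q(\zeta)/\Q(\zeta_3))$, and cyclically permutes the three types, so $c_1,c_2,c_3$ form a single Galois orbit over $\Q(\zeta_3)$. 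A short computation (partial fractions for $1/((1-x)(1-x^2))$ summed over the three roots of $x^3-\zeta_3$, via the logarithmic derivative) gives $c_1+c_2+c_3=0$. Applying $\mathrm{Tr}_{\Q(\zeta)/\Q(\zeta_3)}$ to the Lefschetz identity $n_1c_1+n_2c_2+n_3c_3=1+\zeta_3^2$, where $n_t\ge 0$ counts the fixed points of each type, the left-hand side becomes $(n_1+n_2+n_3)(c_1+c_2+c_3)=0$ while the right-hand side becomes $3(1+\zeta_3^2)\neq 0$, a contradiction. Thus $k=3$ (and hence $k=6$) cannot occur, so $\sigma$ is purely non-symplectic and $\tau$ is non-symplectic.

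The main obstacle is the second step: establishing that $\Fix(\sigma)$ consists only of isolated points of exactly the three types $A_{1,2},A_{4,8},A_{5,7}$, since it is this rigidity that makes the Galois-orbit structure, and therefore the vanishing trace, available. Once that is in place the contradiction is essentially automatic, the only genuine computation being the identity $c_1+c_2+c_3=0$; note also that the argument is insensitive to the sign convention in the Lefschetz contributions, because the complex-conjugate orbit has vanishing trace as well.
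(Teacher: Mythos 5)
Your proposal is correct and follows essentially the same route as the paper: assume $\tau$ is symplectic, deduce that $\sigma$ has only isolated fixed points of the three local types $A_{1,2}$, $A_{4,8}$, $A_{5,7}$, and derive a contradiction from the holomorphic Lefschetz formula. The only real difference is one of completeness: the paper simply asserts that the Lefschetz equation ``is inconsistent,'' whereas you actually prove this via the Galois-trace argument (the identity $c_1+c_2+c_3=0$ over $\Q(\zeta_3)$, which I verified), and you also make explicit the bookkeeping the paper leaves implicit (the reduction to $\sigma^*\omega_X=\zeta^3\omega_X$ and the exclusion of the type $A_{6,6}$ because it would force $d\tau_p$ to be trivial).
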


\begin{proof}  Assume that $\tau$ is symplectic i.e.  $\sigma^*(\omega_X)=\zeta^3 \omega_X$.
Since $\tau$ has only isolated fixed points  \cite[\S 5]{Nikulin},
the same is true for $\sigma$.
Let $p$ be a fixed point of $\sigma$. 
By Lemma \ref{diag} $\sigma$ can be locally linearized and diagonalized  to  be of the form
\[
A_{i,j}=\begin{pmatrix}\zeta^{i}& 0\\0& \zeta^{j}\end{pmatrix},\qquad \mbox{with }  i+j\equiv 3\ ({\rm mod}\, 9),
\]
thus the possible types are $A_{1,2}, A_{4,8}$ and $A_{5,7}$.
Let $a_{1,2}, a_{4,8}, a_{5,7}$ be the number of  points of types $A_{1,2},A_{4,8}, A_{5,7}$ respectively. 
By the holomorphic Lefschetz's formula \cite[Theorem 4.6]{AS} we have 
\[
1+\zeta^{-3}= \frac{a_{1,2}}{(1-\zeta)(1-\zeta^2)}+\frac{a_{4,8}}{(1-\zeta^4)(1-\zeta^8)}+\frac{a_{5,7}}{(1-\zeta^5)(1-\zeta^7)},
\]
which  is inconsistent. Therefore, $\tau$ is non-symplectic. 
\end{proof}

In \cite[Theorem 2.2]{ArtebaniSarti} the authors classified 
order $3$ non-symplectic automorphisms $\tau$ of $K3$ surfaces relating
the structure of their fixed locus with their action in cohomology.
In particular they proved that 
\[
\Fix(\tau)= \{p_1,\dots, p_{n}\} \sqcup C_{g} \sqcup E_1\sqcup\dots \sqcup E_{k-1},
\]
 where the $E_i$'s are smooth curves of genus $0$, $C_g$ is a smooth curve of genus $g$ and
 moreover $m+n=10$, where $2m=22-\rk(L^{\tau})$. 
 Since   the eigenvalues of $\sigma^*$  in  $(L^{\tau})^{\perp}\otimes_{\Z} \C\subseteq H^2(X,\C)$  
are the primitive $9$-th roots of unity,  then  $2m$ is divisible by $\varphi(9)=6$.
This implies that $m\in \{3,6,9\}$ and by \cite[Table 1]{ArtebaniSarti} the fixed locus of
$\tau$ is described in the following table.

 \begin{table}[H]
\begin{tabular}{c|ccc|c|}
Case &$(n,k,g)$&$m$\\
\hline\hline
A & (1,1,3) &9\\
\hline
B& (1,2,4) & 9\\
\hline
C& (4,1,0)& 6\\
\hline
D& (4,2,1) & 6\\
\hline
E& (4,3,2) &6\\
\hline
F& (4,4,3) &6\\
\hline
G& (7,4,0) &3\\
\hline
H& (7,5,1) &3\\
\end{tabular}
\vspace{0.2cm}
\caption{Fixed locus of $\sigma^3$}
\label{tau}
\end{table}

We will denote by $n_{\sigma}$ 
the number of isolated points in $\Fix(\sigma)$, by $g_{\sigma}$ the maximal genus 
of a curve in it (if any) and by $k_\sigma$ the number of curves in it.
By Lemma \ref{diag}  the fixed points of $\sigma$ are of one of the following  types: 
\[
A_{1,0},\ A_{2,8},\ A_{3,7},\ A_{4,6},\ A_{5,5}.
\]
The points of type $A_{1,0}$ are those which are contained in a fixed curve by $\sigma$,
those of type $A_{3,7}$ and $A_{4,6}$ are contained in fixed curve by $\tau$ 
and the points of the remaining types are isolated for $\sigma$ and $\tau$.
We will denote by $a_{i,j}$ the number of fixed points of type $A_{i,j}$.

\begin{lemma}\label{lemaA}
Let $
\alpha=\displaystyle\sum_{C\subset {\tiny{\Fix(\sigma)}}}(1-g(C)).
$
The following relations hold:
\begin{equation}\label{star}
\begin{cases}
a_{2,8}+a_{5,5}  = 3\alpha+1\\
a_{3,7} = 2\alpha+1\\
a_{4,6}+3a_{2,8}= 8\alpha+4.
\end{cases}
\tag{$*$}\end{equation}
In particular $\alpha\geq 0$.
Moreover $n_{\sigma}+2\alpha=2+r-l$, where $r=\rk(L^{\sigma})$ and
$
l=\dim\{v\in H^{2}(X,\C):\sigma^*(v)=\zeta^3 v\}.
$
\end{lemma}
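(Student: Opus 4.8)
The plan is to extract the system $(*)$ from the holomorphic Lefschetz fixed point formula applied to $\sigma$, and to obtain the final identity $n_\sigma+2\alpha=2+r-l$ from the topological Lefschetz fixed point formula together with a count of the eigenvalues of $\sigma^*$ on $H^2(X,\C)$. Throughout I normalize $\sigma^*(\omega_X)=\zeta\omega_X$, consistent with the list of admissible fixed-point types $A_{1,0},A_{2,8},A_{3,7},A_{4,6},A_{5,5}$ above (all satisfy $i+j\equiv 1 \pmod 9$).

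First I would write down the holomorphic Lefschetz formula \cite{AS} for $\sigma$. Its left-hand side equals $1+\zeta^{-1}=1+\zeta^8$. On the right-hand side each isolated fixed point of type $A_{i,j}$ contributes $\frac{1}{(1-\zeta^i)(1-\zeta^j)}$, exactly as in the proof of Lemma \ref{tauns}, so the isolated points give
\[
\frac{a_{2,8}}{(1-\zeta^2)(1-\zeta^8)}+\frac{a_{3,7}}{(1-\zeta^3)(1-\zeta^7)}+\frac{a_{4,6}}{(1-\zeta^4)(1-\zeta^6)}+\frac{a_{5,5}}{(1-\zeta^5)^2}.
\]
Each curve $C\subset\Fix(\sigma)$ (smooth by Lemma \ref{diag}, of type $A_{1,0}$ along $C$, so $\sigma$ acts on the normal bundle by $\zeta$) contributes a term of the shape $\frac{1-g(C)}{1-\zeta}+\frac{\zeta}{(1-\zeta)^2}C^2$. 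The key simplification is adjunction: since $K_X=0$ and $C$ is smooth, $C^2=2g(C)-2=-2(1-g(C))$, so the curve contribution depends only on $1-g(C)$, and summing over all fixed curves replaces the curve part by $\alpha$ times a single explicit element of $\Q(\zeta)$.

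Next I would expand the resulting identity in the $\Q$-basis $\{1,\zeta,\zeta^2,\zeta^3,\zeta^4,\zeta^5\}$ of $\Q(\zeta)$, using $\zeta^6=-\zeta^3-1$. This turns the single equation into rational linear relations among $a_{2,8},a_{3,7},a_{4,6},a_{5,5}$ and $\alpha$; taking appropriate integer combinations yields the three relations $(*)$. Since every $a_{i,j}$ is a nonnegative integer and $\alpha\in\Z$, the relation $a_{3,7}=2\alpha+1\geq 0$ forces $\alpha\geq 0$. I expect the delicate step here to be pinning down the exact curve-contribution term (in particular its sign) and carrying out the cyclotomic bookkeeping without slips; the conceptual content is just linear algebra over $\Q(\zeta)$.

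For the final identity I would use the topological Lefschetz formula. On one side, $\Fix(\sigma)$ is a disjoint union of $n_\sigma$ isolated points and the fixed curves, so $\chi(\Fix(\sigma))=n_\sigma+\sum_C(2-2g(C))=n_\sigma+2\alpha$. On the other side the formula gives $2+\tr(\sigma^*|_{H^2(X,\R)})$, since $\sigma^*$ is trivial on $H^0$ and $H^4$ and $H^1=H^3=0$. To compute the trace I decompose $H^2(X,\C)$ into $\sigma^*$-eigenspaces: the eigenvalue $1$ occurs with multiplicity $r=\rk(L^\sigma)$; because $\sigma^*$ is integral, the primitive cube roots $\zeta^3,\zeta^6$ occur with equal multiplicity $l$ and lie in $L^\tau\otimes\C$ (indeed $\sigma^*v=\zeta^3 v$ implies $\tau^*v=v$); and the six primitive $9$-th roots $\zeta,\zeta^2,\zeta^4,\zeta^5,\zeta^7,\zeta^8$ occur with equal multiplicity $m/3$, spanning $(L^\tau)^\perp\otimes\C$ of dimension $2m$. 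Since $\zeta^3+\zeta^6=-1$ and the sum of the six primitive $9$-th roots is $0$, the trace collapses to $r-l$, giving $n_\sigma+2\alpha=2+r-l$.
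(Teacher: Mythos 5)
Your overall route is exactly the paper's: the holomorphic Lefschetz formula of Atiyah--Singer plus adjunction to get the system \eqref{star}, and the topological Lefschetz formula plus an eigenvalue count on $H^2(X,\C)$ for the identity $n_\sigma+2\alpha=2+r-l$. Your treatment of the second half is complete and correct, and matches the paper's proof line by line: $\chi(\Fix(\sigma))=n_\sigma+2\alpha$, the trace on $H^0\oplus H^4$ contributes $2$, and on $H^2$ the eigenvalue $1$ contributes $r$, the pair $\zeta^3,\zeta^6$ contributes $-l$, and the six primitive ninth roots of unity (which have equal multiplicity, being Galois conjugate for an integral isometry) contribute $0$.

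The genuine problem is in the first half, precisely at the spot you yourself flag as ``delicate'': the sign of the $C^2$ term in the curve contribution. The Atiyah--Singer contribution of a pointwise fixed curve $C$ with normal eigenvalue $\zeta$ is $\frac{1-g(C)}{1-\zeta}-\frac{\zeta\, C^2}{(1-\zeta)^2}$, which after adjunction ($C^2=2g-2$) equals $(1-g(C))\frac{1+\zeta}{(1-\zeta)^2}$; summing over fixed curves gives the term $\alpha\frac{1+\zeta}{(1-\zeta)^2}$ that appears in the paper's displayed formula, and \eqref{star} is extracted from that. Your version with the plus sign instead gives $(1-g(C))\frac{1-3\zeta}{(1-\zeta)^2}$, and expanding the resulting identity in a $\Q$-basis of $\Q(\zeta_9)$ produces a \emph{different} linear system, not \eqref{star}. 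A quick sanity check shows the plus sign cannot be right: for a non-symplectic involution ($\zeta=-1$, fixed locus a union of curves, no isolated points) the left-hand side is $1+(-1)^{-1}=0$, and with the minus sign each curve contributes $\frac{1-g}{2}+\frac{2g-2}{4}=0$, as it must, whereas your sign gives $1-g$ per curve, contradicting the formula whenever rational curves are fixed. Since the entire content of \eqref{star} lives in this one computation --- which you also leave unexecuted (``appropriate integer combinations'') --- deferring the sign is not harmless bookkeeping: carried out as written, your derivation would fail. Everything else (the left-hand side $1+\overline\zeta$, the isolated-point terms, the deduction $\alpha\geq 0$ from $a_{3,7}=2\alpha+1\in\Z_{\geq 0}$) is correct and agrees with the paper.
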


\begin{proof}
By the holomorphic Lefschetz's formula \cite[Theorem 4.6]{AS} 
we have that:
\[
1+\overline{\zeta} =\sum_{i=2}^5\frac{a_{i,10-i}}{(1-\zeta^i)(1-\zeta^{10-i})}+\alpha\frac{1+\zeta}{(1-\zeta)^2}.
\]
This gives the first three equalities. The last equality follows from the topological Lefschetz formula 
\[
\chi(\Fix(\sigma))=\displaystyle\sum_{i=0}^4(-1)^{i}\tr\left(\sigma^*\Big|_{H^{i}(X,\R)}\right)
\]
since  $\chi(\Fix(\sigma))=n_\sigma+2\alpha$ and 
since 
\[
\tr(\sigma^*|_{H^2(X,\R)})=r+l(\zeta^3+\zeta^6)+s(\zeta+\zeta^2+\zeta^4+\zeta^5+\zeta^7+\zeta^8)=r-l,
\]
 and   $\tr(\sigma^*|_{H^0(X,\R)})=\tr(\sigma^*|_{H^4(X,\R)})=1$.
\end{proof}

Finally we observe that, by Lemma \ref{tree}, the action of $\sigma$ 
on a tree of smooth rational curves is as in Figure \ref{arbol},
where double curves are in $\Fix(\sigma)$.
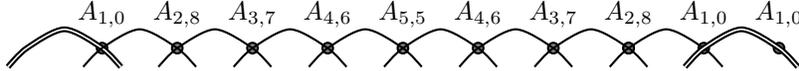
\begin{figure}[h]
\centering
\begin{tikzpicture}[xscale=.5,yscale=.5, thick] 
  \node [circle, draw, fill=black!50, inner sep=0pt, minimum width=4pt]() at (2,0) {};
  \node [circle, draw, fill=black!50, inner sep=0pt, minimum width=4pt]() at (4,0) {};
  \node [circle, draw, fill=black!50, inner sep=0pt, minimum width=4pt]() at (6,0) {};
  \node [circle, draw, fill=black!50, inner sep=0pt, minimum width=4pt]() at (8,0) {};
  \node [circle, draw, fill=black!50, inner sep=0pt, minimum width=4pt]() at (10,0) {};
  \node [circle, draw, fill=black!50, inner sep=0pt, minimum width=4pt]() at (12,0) {};
  \node [circle, draw, fill=black!50, inner sep=0pt, minimum width=4pt]() at (14,0) {};
  \node [circle, draw, fill=black!50, inner sep=0pt, minimum width=4pt]() at (16,0) {};
  \node [circle, draw, fill=black!50, inner sep=0pt, minimum width=4pt]() at (16,0) {};
  \node [circle, draw, fill=black!50, inner sep=0pt, minimum width=4pt]() at (18,0) {};
  \node [circle, draw, fill=black!50, inner sep=0pt, minimum width=4pt]() at (20,0) {};

\node[above] at (2,0.3) {$A_{1,0}$};
\node[above] at (4,0.3) {$A_{2,8}$};
\node[above] at (6,0.3) {$A_{3,7}$};
\node[above] at (8,0.3) {$A_{4,6}$};
\node[above] at (10,0.3) {$A_{5,5}$};
\node[above] at (12,0.3) {$A_{4,6}$};
\node[above] at (14,0.3) {$A_{3,7}$};
\node[above] at (16,0.3) {$A_{2,8}$};
\node[above] at (18,0.3) {$A_{1,0}$};
\node[above] at (20,0.3) {$A_{1,0}$};
    
    \draw [double] plot [smooth] coordinates {(-0.5,-0.5) (0,0) (1,0.5) (2,0) (2.5,-0.5)};
    \draw plot [smooth] coordinates {(1.5,-0.5) (2,0) (3,0.5) (4,0) (4.5,-0.5)};
    \draw plot [smooth] coordinates {(3.5,-0.5) (4,0) (5,0.5) (6,0) (6.5,-0.5)};
    \draw plot [smooth] coordinates {(5.5,-0.5) (6,0) (7,0.5) (8,0) (8.5,-0.5)};
    \draw plot [smooth] coordinates {(7.5,-0.5) (8,0) (9,0.5) (10,0) (10.5,-0.5)};
    \draw plot [smooth] coordinates {(9.5,-0.5) (10,0) (11,0.5) (12,0) (12.5,-0.5)};
    \draw plot [smooth] coordinates {(11.5,-0.5) (12,0) (13,0.5) (14,0) (14.5,-0.5)};
    \draw plot [smooth] coordinates {(13.5,-0.5) (14,0) (15,0.5) (16,0) (16.5,-0.5)};
    \draw plot [smooth] coordinates {(15.5,-0.5) (16,0) (17,0.5) (18,0) (18.5,-0.5)};
    \draw [double]plot [smooth] coordinates {(17.5,-0.5) (18,0) (19,0.5) (20,0) (20.5,-0.5)};

\end{tikzpicture}
\caption{The action of $\sigma$ on a tree of rational curves}\label{arbol}
\end{figure}

\section{Proof of the main Theorem}

We start proving some preliminary results.

\begin{proposition}\label{lemaB}
The genus of a smooth curve in the fixed locus of an order 
nine non-symplectic automorphism $\sigma$ of a K3 surface is either $0$ or $1$.
Moreover, if  $\tau=\sigma^3$ has invariants $(n,k,g)=(4,4,3)$ then the fixed locus contains a curve of genus $0$. 
\end{proposition}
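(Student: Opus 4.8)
The plan is to use that $\Fix(\sigma)\subseteq\Fix(\tau)$: any curve fixed pointwise by $\sigma$ is fixed by $\tau=\sigma^3$, hence is one of the curves listed in Table \ref{tau}, namely the curve $C_g$ of genus $g$ or one of the $k-1$ rational curves $E_i$. Since the $E_i$ have genus $0$, the only possibly positive-genus fixed curve is $C_g$, so it suffices to bound $g$. Because $\tau$ acts trivially on $C_g$, the restriction $\sigma|_{C_g}$ has order $1$ or $3$; thus the statement is equivalent to saying that $\sigma$ does not fix $C_g$ pointwise when $g\geq 2$, i.e. in cases A, B, E, F.

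First I would dispose of the cases with too few rational curves to compensate a high genus. Suppose $\sigma$ fixes $C_g$ pointwise. By Hodge index $C_g$ is then the unique fixed curve of positive genus, and every other fixed curve is one of the $E_i$, so
\[
\alpha=\sum_{C\subset\Fix(\sigma)}(1-g(C))\leq (1-g)+(k-1).
\]
In cases A and B one has $(g,k)=(3,1)$ and $(4,2)$, so the right-hand side equals $-2<0$, contradicting $\alpha\geq 0$ from Lemma \ref{lemaA}. This already excludes $g\geq 2$ in cases A and B.

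For cases E and F ($n=4$) the inequality $\alpha\geq 0$ no longer suffices, and this is the main obstacle. Here I would keep track of the action of $\sigma$ on each component of $\Fix(\tau)$: every $E_i$ is either $\sigma$-invariant or sits in a $\sigma$-orbit of length $3$; an invariant $E_i$ is fixed pointwise or carries an order-$3$ action with exactly one fixed point of type $A_{3,7}$ and one of type $A_{4,6}$ (Lemma \ref{diag}); and the order-$3$ action induced on $C_g$, when $C_g$ is not fixed pointwise, has $g+2-3\bar g$ fixed points whose types $A_{3,7}$ and $A_{4,6}$ occur $a$ and $b$ times with $a\equiv b\ (\mathrm{mod}\ 3)$, as follows from the holomorphic Lefschetz formula applied to $\sigma|_{C_g}$. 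Combining these counts with the relations $(*)$, in particular $a_{3,7}=2\alpha+1$, and with $r+2l=22-2m$, I would enumerate the admissible configurations and show that those with $g\geq 2$ cannot occur. The delicate point is discarding the borderline configuration in which $C_g$ is fixed pointwise while just enough $E_i$ are fixed to keep $\alpha\geq 0$: excluding it is not forced by the numerical identities alone and requires the finer information on how $\sigma$ may permute the rational components of $\Fix(\tau)$ and on the resulting shape of $L^\sigma$ inside $L^\tau$.

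For the last assertion, in case F the bound $g_\sigma\leq 1$ just proved forces $\sigma$ to act on $C_3$ with order $3$, and it remains to prove $\alpha\geq 1$, so that $\Fix(\sigma)$ contains a rational curve. If instead $\alpha=0$, then no $E_i$ is fixed pointwise, so the point $a_{3,7}=1$ must be supplied by the order-$3$ action on $C_3$ and on the invariant $E_i$; the orbit count (three curves, orbits of length $1$ or $3$) together with the congruence $a\equiv b\ (\mathrm{mod}\ 3)$ on $C_3$ leaves only the configuration with all three $E_i$ in a single orbit and $C_3$ contributing the lone $A_{3,7}$ point, which I would exclude by the same analysis. Hence $\alpha\geq 1$ and, by the genus bound, $\Fix(\sigma)$ contains a curve of genus $0$.
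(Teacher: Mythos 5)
Your argument for cases A and B coincides with the paper's: the inequality $\alpha\geq 0$ from Lemma \ref{lemaA} rules out a pointwise fixed curve of genus $3$ or $4$ there. The genuine gap is in cases E and F, i.e.\ exactly where the content of the proposition lies, and it is a gap you half-acknowledge: the borderline configurations you set aside are in fact \emph{consistent with all the numerical identities you propose to use}, so no enumeration based on them can succeed. Concretely, in case E the configuration where $\sigma$ fixes pointwise the genus $2$ curve $C$ and one rational curve $E_1$, acts with order $3$ on $E_2$ (one point of type $A_{3,7}$, one of type $A_{4,6}$), fixes one of the four isolated points of $\tau$ (type $A_{2,8}$) and permutes the other three, has $\alpha=0$ and satisfies \eqref{star}, the relation $n_\sigma+2\alpha=2+r-l$, and $r+2l=22-2m$. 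In case F with $\alpha=0$, the three $E_i$ forming one $\sigma$-orbit and $C$ carrying $2$ (resp.\ $5$) fixed points gives the solution $(a_{2,8},a_{3,7},a_{4,6},a_{5,5})=(1,1,1,0)$ (resp.\ $(0,1,4,1)$) of \eqref{star}, and both satisfy your congruence $a\equiv b \pmod{3}$ on $C$ (namely $1\equiv 1$ and $1\equiv 4$). So the steps ``requires the finer information \dots\ on the resulting shape of $L^\sigma$ inside $L^\tau$'' and ``which I would exclude by the same analysis'' are not details: they are the whole proof, and nothing you have set up supplies them.

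What the paper actually does at these two points is geometric, not numerical. For genus $2$ (case E): $|C|$ defines a degree two morphism to $\P^2$ branched along a reduced sextic with simple singularities; if $\sigma$ fixed $C$ pointwise it would descend to an order nine projectivity fixing a line pointwise, hence $(x_0,x_1,x_2)\mapsto(\zeta x_0,x_1,x_2)$ up to coordinates, and one checks that no reduced invariant sextic with simple singularities exists. For case F: $L^\tau\cong U\oplus E_8$ is unimodular, so $X$ has a $\tau$-invariant elliptic fibration with a section and a fiber $F_0$ of type $II^*$; one shows $\sigma$ preserves the fibration, acts with order $3$ on its base (a smooth genus one curve has no order $9$ automorphism with fixed points), hence cannot fix the $2$-section $C$ pointwise; and since the $II^*$ diagram has no order $3$ symmetry, $\sigma$ preserves every component of $F_0$ and must fix pointwise its multiplicity $6$ component, which carries at least three fixed points. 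This last curve is precisely the genus $0$ curve required by the second assertion --- the paper obtains it directly, not by your route of proving $\alpha\geq 1$ through orbit counts, which, as shown above, cannot be closed by the identities at your disposal.
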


\begin{proof}
Let $\sigma$ be an automorphism as in the statement.  
By Table \ref{tau} the genus of a curve fixed by $\tau=\sigma^3$ is at most $4$.
Moreover, by Lemma \ref{lemaA} we have that $\alpha=(1-g_{\sigma})+k_{\sigma}-1\geq 0$,     
thus when the invariants of $\tau$ are either $(n,k,g)=(1,1,3)$ or $(1,2,4)$, 
the automorphism $\sigma$ can not fix the curve of positive genus. 

Assume that $\sigma$ fixes a smooth curve $C$ of genus two, i.e. $\tau$ 
has invariants $(n,k,g)=(4,3,2)$. 
The linear system $|C|$ is base point free and defines a degree two 
morphism $\pi:X\to \P^2$ which can be factorized as the composition of a birational 
morphism $\theta:X\to X'$ contracting all smooth rational curves orthogonal to $C$ 
and a double cover $u:X'\to \P^2$ branched along a reduced plane sextic $S$ \cite{SD}.
Since $X'$ has rational double points, then $S$ has simple singularities, 
in particular either double or triple points \cite[III, \S 7]{BP}.
Since $|C|$ is invariant for $\sigma$, then $\sigma$ induces an order nine automorphism 
$\bar\sigma$ of $\P^2$ which preserves the sextic $S$. 
Moreover, since $\sigma$ fixes $C$ pointwise, $\bar\sigma$ fixes pointwise the line $\pi(C)$ in $\P^2$.
Thus, up to a coordinate change we can assume that $\bar\sigma(x_0,x_1,x_2)=(\zeta x_0,x_1,x_2)$, 
where $\zeta$ is a primitive $9$-th root of unity. 
Since there are no reduced plane sextics with simple singularities which are invariant 
for $\bar\sigma$, this case does not occur.

We now consider the case when $\tau$ has invariants $(n,k,g)=(4,4,3)$. 
By \cite[Theorem 3.3]{ArtebaniSarti} the fixed lattice $L^\tau$ 
of $\tau=\sigma^3$ is isomorphic to $U\oplus E_8$.  
Since $L^\tau$ is unimodular we have that $\Pic(X)=L^\tau\oplus M$,
where $M$ is a negative definite lattice.
This implies that $X$ has a $\tau$-invariant elliptic fibration 
$\pi:X\to \P^1$ with a section $E$ having a  reducible fiber $F_0$
of type $II^*$ (see the proof of \cite[Proposition 3]{ArtebaniSarti4}) 
and \cite[Lemma 3.1]{Kondo}). 
The genus three curve $C$  fixed by $\tau$ clearly intersects all fibers of $\pi$, thus 
$\tau$ preserves each fiber. This implies that $E$ is fixed by $\tau$ and 
$C$ intersects the general fiber of $\pi$ in two points 
by the Riemann-Hurwitz formula. 
By \cite[Lemma 5]{ArtebaniSarti4}, since $\pi$ has a section $E$, 
$C$ is invariant for $\sigma$, $C^2=4$ and $C\cdot E=0$,
we obtain that $f\cdot \sigma^*(f)\leq 1$, where $f$ denotes the 
class of a fiber of $\pi$. 
This implies that $\sigma^*(f)=f$, i.e. $\pi$ is invariant for $\sigma$.
In particular $\sigma$ preserves the section $E$.
Moreover $\sigma$ acts with order $3$ on the basis of $\pi$,
since a smooth genus one curve does not have automorphisms 
of order $9$ fixing points. In particular $\sigma$ does not fix $C$.
The fiber $F_0$ is also invariant for $\sigma$ (since otherwise $\pi$ would have 
three fibers of type $II^*$, which is impossible for a K3 surface),
in particular $\sigma$ must fix the component of multiplicity $6$ of $F_0$,
since it has genus zero and contains at least three fixed points. 
\end{proof}

We now study the case when $\tau=\sigma^3$ fixes a smooth curve of genus one.

\begin{proposition}\label{elliptic}
Let $\sigma$ be an automorphism of order $9$ of a K3 surface $X$ 
such that $\tau=\sigma^3$ fixes pointwise an elliptic curve $E$.
Then $|E|$ defines a $\sigma$-invariant elliptic fibration $\pi:X\to \P^1$ such that 
$\sigma$ induces an order nine automorphism of $\P^1$ with two fixed points corresponding 
to the fiber $E$ and to a singular fiber $E'$.
Moreover one of the following holds: 

\begin{enumerate}
\item $E'$ is of type $I_0^*$ and $\Fix(\tau)=E\cup F\cup\{p_1,\dots,p_4\}$,
where $F$ is the central component of $E'$ and the $p_i$'s belong each to one of the four branches of $E'$.
Moreover there are the following possibilities for $\Fix(\sigma)$:

\begin{enumerate}[\rm D1.] 
\item  $\Fix(\sigma)=F\cup\{p_1,\dots,p_4,q_1,q_2,q_3\}$ where $q_1,q_2,q_3\in E$,
\item $\Fix(\sigma)=E\cup \{q_4,q_5,p_4\}$, where $q_4,q_5\in F$, 
\item  $\Fix(\sigma)=\{q_1,\dots,q_5,p_4\}$, where $q_1,q_2,q_3\in E$ and $q_4,q_5\in F$,
\item $\Fix(\sigma)=\{q_4,q_5,p_4\}$, where $q_4,q_5\in F$.
\end{enumerate}
\item $E'$ is of type $I_9^*$, $\Fix(\tau)=E\cup F_1\cup F_2\cup F_3\cup F_4\cup \{q_1,\dots,q_7\}$ and 
$\Fix(\sigma)= F_1\cup F_4\cup\{p_1,\dots,p_{14}\}$, where the $F_i$'s are components of the fiber  $E'$, 
$p_1,\dots,p_{11}\in E'$ and $p_{12},p_{13},p_{14}\in E$. 
\end{enumerate}
 \end{proposition}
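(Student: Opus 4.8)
The plan is to first produce the elliptic fibration, then pin down its two distinguished fibers, and finally read off $\Fix(\sigma)$ from the local analysis. Since $\tau$ fixes the genus one curve $E$ pointwise we have $E^2=0$, so the linear system $|E|$ defines an elliptic fibration $\pi\colon X\to\P^1$ having $E$ as a smooth fiber. Because $\sigma$ commutes with $\tau=\sigma^3$, the curve $\sigma(E)$ is again fixed pointwise by $\tau$ and has genus one; since in the cases at hand $\Fix(\tau)$ contains a unique curve of positive genus (Table \ref{tau}), we get $\sigma(E)=E$. Hence $\sigma$ preserves $|E|$ and descends to an automorphism $\bar\sigma$ of $\P^1$ of order dividing $9$, fixing the point $[E]$.

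Next I would show that $\bar\sigma$ has order exactly $9$. At a fixed point $p\in E$ of $\tau$, Lemma \ref{diag} linearizes $\tau$ as $\mathrm{diag}(1,\zeta_3^{\,k})$ with the eigenvalue $1$ along $E$ and $\zeta_3^{\,k}\neq 1$ along the direction transverse to the fiber; identifying that transverse direction with the tangent of the base via $d\pi$ shows that $\bar\tau=\bar\sigma^3$ acts nontrivially near $[E]$, so $\bar\tau$ has order $3$ and $\bar\sigma$ has order $9$. An order $9$ automorphism of $\P^1$ has exactly two fixed points, $[E]$ and the class of a second fiber $E'$, and these are also the only fixed points of $\bar\tau$. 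Consequently every fiber other than $E,E'$ lies in a free $\bar\sigma$-orbit of length $9$, and any curve fixed by $\tau$ must lie in $E$ or $E'$; since $E$ is irreducible of genus one, the rational curves of $\Fix(\tau)$ are components of $E'$, forcing $E'$ to be reducible. The relation $24=e(X)=e(E')+9\sum_i e(F_i)$, where the $F_i$ are orbit representatives of the remaining singular fibers, then leaves only finitely many possibilities for $e(E')$.

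I would determine the Kodaira type of $E'$ using Table \ref{tau}. In case $(n,k,g)=(4,2,1)$ exactly one rational curve $F$ is fixed by $\tau$; it must be the component meeting all the others, and matching the fixed-point data of $\tau$ with an order three symmetry forces $E'$ to be of type $I_0^*$ with $F$ its central component and the four isolated points $p_1,\dots,p_4$ lying one on each branch (so $e(E')=6$ and the remaining fibers form two orbits of $I_1$'s). In case $(n,k,g)=(7,5,1)$ the four fixed rational curves and seven isolated points match a type $I_9^*$ fiber ($e(E')=15$, one orbit of nine $I_1$'s); here the tree action of Lemma \ref{tree} along the chain of components, together with the local types allowed by Lemma \ref{diag}, identifies which components $\sigma$ fixes pointwise, giving $F_1\cup F_4$. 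To obtain the subcases D1--D4 I would then analyze the induced action of $\sigma$ on $E$ and on the components of $E'$: on $E$ the automorphism $\sigma$ either fixes three points, no point, or is the identity, and on $F\cong\P^1$ it is either fixed pointwise or has two fixed points, the admissible combinations being constrained by the relations $(*)$ of Lemma \ref{lemaA} together with $n_\sigma+2\alpha=2+r-l$.

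The main obstacle is this last step: ruling out the a priori possible but non-occurring combinations of local actions along $E'$, and showing that the four configurations D1--D4 (respectively the single $I_9^*$ configuration) are exactly those consistent with both the holomorphic and topological Lefschetz data. Getting the bookkeeping of isolated fixed points on $E$, on $F$, and on the branches to close up precisely against the relations $(*)$ and the tree action of Figure \ref{arbol} is where the care is needed.
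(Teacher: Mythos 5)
Your skeleton coincides with the paper's own proof: the fibration defined by $|E|$, the observation that non-symplecticity of $\tau$ forces a nontrivial (hence order nine) action on the base, the conclusion that all $\tau$-fixed rational curves and isolated points lie in a single reducible fiber $E'$, the identification of the type of $E'$ by matching against the data of Table \ref{tau}, and the final counts via Lemma \ref{lemaA} and Riemann--Hurwitz. Your Euler characteristic relation $24=e(E')+9\sum_i e(F_i)$, which restricts $e(E')$ to $\{6,15,24\}$ before the matching step, is a clean supplement; the paper instead rules out the competing Kodaira types directly with Lemma \ref{tree} (two $\tau$-fixed curves in a tree lie at distance three).

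The gap is that the proposal stops exactly where the content of the proposition lies: the subcases D1--D4 and the precise $I_9^*$ configuration \emph{are} the statement, and you defer their derivation (``the main obstacle'', ``where the care is needed'') rather than carry it out. Two concrete steps are missing. First, in the $I_9^*$ case, before Lemma \ref{tree} can be applied componentwise you must know that $\sigma$ does not permute the components of $E'$; the paper gets this from the fact that the dual graph of $I_9^*$ admits no symmetry of order three, so the order-nine $\sigma$ acts trivially on it, after which $F_1$ and $F_4$ each contain three $\sigma$-fixed intersection points and are therefore pointwise fixed, yielding $11$ isolated points on $E'$ and, since $a_{3,7}=2\alpha+1=5$ exceeds the four isolated points available on $F_2\cup F_3$, three further fixed points on $E$. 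Second, in the $I_0^*$ case the enumeration must actually be closed: since $\tau$ fixes $F$ pointwise, $\sigma|_F$ has order dividing three, so either $\sigma$ fixes $F$ pointwise --- then all four branches are invariant, each $p_i$ is fixed, and \eqref{star} with $\alpha=1$, $a_{3,7}=3$ forces exactly three fixed points on $E$ (case D1) --- or $\sigma|_F$ has order three, in which case it fixes exactly one of the four branch points, the fixed points on $E'$ are $\{q_4,q_5,p_4\}$, and the three options for $\sigma|_E$ (pointwise, three points, free) give D2, D3, D4; the remaining combinations ($F$ pointwise with $E$ pointwise or free) are excluded because \eqref{star} then demands isolated $\sigma$-fixed points on $\tau$-fixed curves that do not exist. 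All of this does follow from the tools you name, but it is the proof, not a footnote to it.
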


\begin{proof}
Since $E$ is invariant for $\sigma$, the elliptic fibration $\pi$ defined by $|E|$ is $\sigma$-invariant.
The action induced by $\tau$ on the basis of the fibration is not trivial, since otherwise the action of $\tau$ at a point of $E$ 
would be the identity on the tangent space, contradicting the fact that $\tau$ is non-symplectic.
Thus $\sigma$ induces an  order nine automorphism on the basis of the fibration. 
The two fixed points of such action correspond to the fiber $E$ and to another fiber $E'$ 
which must contain all smooth rational curves and points fixed by $\tau$.
By Table \ref{tau} there are two possible cases for $\Fix(\tau)$: either $(n,k,g)=(4,2,1)$ or $(7,5,1)$.

In the first case the fiber $E'$ is of type $I_0^*$. All other possible types for the fiber can be ruled out using the fact 
that $\tau$ has only one fixed curve, four isolated points and using Lemma \ref{tree}, 
which implies that in any tree of  smooth rational curves, two fixed curves by $\tau$ have distance three (in the intersection graph).
Thus the central component of $E'$ is fixed by $\tau$ and the remaining components contain one isolated fixed point each.
The automorphism $\sigma$ can either fix $E$, act on it with three fixed points, or without fixed points, by Riemann-Hurwitz formula.
Moreover, it either fixes the central component of $E'$ and four isolated points, or it acts with order three on the central component and has 
three isolated fixed points on $E'$.
If $\sigma$ fixes the central component of $E'$, then $\alpha=1$, $a_{3,7}=3$ by Lemma \ref{lemaA} and $E$ must contain three fixed points.
Thus the only possible cases for the action of $\sigma$ are the four cases in the statement.

In the second case, using again Lemma \ref{tree},  
we find that $E'$ is of type $I_9^*$ 
with the configurations in Figure \ref{curves}, where each vertex represents a curve and double circles represent the 
curves $F_1,\dots, F_4$ pointwise fixed by $\tau$. 
Moreover, $E'$ contains $7$ isolated fixed points, where non-fixed components meet.
\begin{figure}[h]
\centering
\begin{tikzpicture}[xscale=.6,yscale=.5, thick] 
  \node[circle, draw, fill=black!50, inner sep=0pt, minimum width=4pt] (n15) at (-10,-1) {};
  \node[circle, draw, fill=black!50, inner sep=0pt, minimum width=4pt] (n14) at (-10,1) {};
  \node[circle, draw, fill=black!50, inner sep=0pt, minimum width=4pt,double] (n13) at (-9,0) {};
  \node[circle, draw, fill=black!50, inner sep=0pt, minimum width=4pt] (n12) at (-8,0) {};
  \node[circle, draw, fill=black!50, inner sep=0pt, minimum width=4pt] (n11) at (-7,0) {};
  \node [double,circle, draw, fill=black!50, inner sep=0pt, minimum width=4pt](n10) at (-6,0) {};
  \node [circle, draw, fill=black!50, inner sep=0pt, minimum width=4pt](n9) at (-5,0) {};
  \node[circle, draw, fill=black!50, inner sep=0pt, minimum width=4pt] (n8) at (-4,0) {};
  \node[circle, draw, fill=black!50, inner sep=0pt, minimum width=4pt] [double](n1) at (-3,0) {};
  \node[circle, draw, fill=black!50, inner sep=0pt, minimum width=4pt] (n2) at (-2,0) {};
  \node[circle, draw, fill=black!50, inner sep=0pt, minimum width=4pt] (n3) at (-1,0) {};
  \node [circle, draw, fill=black!50, inner sep=0pt, minimum width=4pt, double](n4) at (0,0) {};
  \node [circle, draw, fill=black!50, inner sep=0pt, minimum width=4pt](n6) at (1,1) {};
  \node [circle, draw, fill=black!50, inner sep=0pt, minimum width=4pt](n7) at (1,-1) {};

  \foreach \from/\to in {n14/n13, n15/n13,n13/n12, n12/n11, n11/n10, n10/n9, n9/n8, n8/n1, n1/n2,n2/n3,n3/n4,n4/n6,n4/n7}
    \draw (\from) -- (\to);
    
\node[above] at (-9,0.2) {$F_1$};
\node[above] at (-6,0.2) {$F_2$};
\node[above] at (-3,0.2) {$F_3$};
\node[above] at (0,0.2) {$F_4$};
\end{tikzpicture}
\caption{Configuration $I_9^*$} \label{curves}
\end{figure}
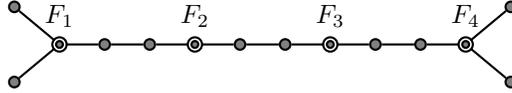

Since the intersection graph of $E'$ has no order $3$ symmetry, 
the automorphism $\sigma$ preserves each component of $E'$. 
Since the curves $F_1, F_4$ contain three fixed points each, then $\sigma$ must fix them pointwise. 
Moreover, it fixes $11$ isolated points in $E'$ (where non-fixed components meet), $4$ of them   
on $F_2$ and $F_3$. By Lemma \ref{lemaA}, since $a_{3,7}=2\alpha+1=5$, $\sigma$ also has fixed points in $E$.
By Riemann-Hurwitz formula, $\sigma$ has $3$ fixed points in $E$.
 \end{proof}

We now prove a remark about order three automorphisms of hyperelliptic curves.

\begin{lemma}\label{hyp}
Let $C$ be a hyperelliptic curve of genus $g\geq 2$ and let $f$ 
be an automorphism of $C$ or order 3, then $f$ has $2,3$ or $4$ fixed points.
\end{lemma}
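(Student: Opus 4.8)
The plan is to exploit the hyperelliptic structure together with the elementary fact that an order three automorphism of $\P^1$ has exactly two fixed points.

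First I would recall that for a hyperelliptic curve $C$ of genus $g\geq 2$ the hyperelliptic involution $\iota$ is unique and lies in the center of $\Aut(C)$: this is because the $g^1_2$ it induces is the unique linear system of its kind on $C$, so every automorphism preserves it and hence commutes with $\iota$. In particular $f$ commutes with $\iota$ and descends to an automorphism $\bar f$ of the quotient $\P^1=C/\iota$, compatibly with the double cover $\pi\colon C\to\P^1$ whose branch points are the $2g+2$ Weierstrass points of $C$.

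Next I would check that $\bar f$ has order exactly three. Its order divides three, and if $\bar f$ were the identity then $f$ would preserve every fiber $\{x,\iota(x)\}$ of $\pi$; acting on a two-element set, $f$ would agree with $\id$ or $\iota$ on a general fiber, forcing $f\in\{\id,\iota\}$, contrary to $f$ having order three. Thus $\bar f$ is a nontrivial order three automorphism of $\P^1$ and therefore has exactly two fixed points, say $a$ and $b$.

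The main step is a fiber-by-fiber count over $a$ and $b$. Every fixed point $p$ of $f$ satisfies $\bar f(\pi(p))=\pi(p)$, so $\pi(p)\in\{a,b\}$ and all fixed points of $f$ lie in the two fibers $\pi^{-1}(a)$ and $\pi^{-1}(b)$. Over a point $c\in\{a,b\}$ there are two possibilities. If $c$ is a branch point, its fiber is a single Weierstrass point, automatically fixed by $f$, contributing one fixed point. If $c$ is not a branch point, its fiber is a pair $\{x,\iota(x)\}$ preserved by $f$; since an element of order three acting on a two-element set must act trivially (an order three permutation of two points is impossible), $f$ fixes both points, contributing two. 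Summing the contributions from $a$ and $b$ yields $1+1$, $1+2$, or $2+2$, so $f$ has exactly $2$, $3$, or $4$ fixed points according to how many of $a,b$ are Weierstrass points.

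The only delicate ingredient is the centrality of $\iota$ and the resulting reduction to $\P^1$; once this is secured the counting is immediate, so I expect the verification of centrality (equivalently, of the uniqueness of the $g^1_2$ for $g\geq 2$) to be the single point requiring care. As a consistency check one may compare with the Riemann--Hurwitz formula for $C\to C/f$, which expresses the number of fixed points as $g+2-3g'$, with $g'$ the genus of the quotient.
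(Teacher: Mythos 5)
Your proof is correct and follows essentially the same route as the paper's: pass to the quotient $\pi\colon C\to\P^1$ by the (central) hyperelliptic involution, note that the induced order-three automorphism $\bar f$ of $\P^1$ has exactly two fixed points, and conclude that $\Fix(f)=\pi^{-1}(\{a,b\})$ since an order-three action on a two-point fiber must be trivial (the paper phrases this as $\deg\pi$ and the order of $f$ being coprime). Your additional verifications---that $\bar f$ is not the identity and the explicit fiber-by-fiber count---are details the paper leaves implicit, but the argument is the same.
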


\begin{proof}
Let $\pi:C\to\P^1$ be the quotient by the hyperelliptic involution. 
Since the hyperelliptic involution commutes with $f$, 
then $f$ induces an automorphism $\bar{f}$ of order $3$ on $\P^1$. 
By Riemann-Hurwitz formula $\bar{f}$ fixes $2$ points $q_1,q_2\in \P^1$.
 Thus, since $\deg(\pi)$ and $\deg(f)$ are coprime, 
 the fixed locus of $f$ is equal to $\pi^{-1}(\{q_1,q_2\})$.  
 \end{proof}

\begin{proof}[Proof of the main Theorem]

Part $(i)$ follows from Lemma \ref{tau}.
We now analyse the cases for $\Fix(\tau)$ as they appear in Table \ref{tau}
and deduce the possibilities for $\Fix(\sigma)$ according to Lemma \ref{lemaA}.

\begin{enumerate}[A.]
\item In this case $\Fix(\tau)=C\sqcup\{p\}$ with $g(C)=3$.
By Proposition \ref{lemaB}, $C$ is not contained in $\Fix(\sigma)$, thus $\alpha=0$
and we obtain two possible cases:\\ 
 $(a_{2,8}, a_{3,7}, a_{4,6}, a_{5,5})=(0,1,4,1)$ (case A1) and
 $(a_{2,8}, a_{3,7}, a_{4,6}, a_{5,5})=(1,1,1,0)$ (case A2).

\item In this case $\Fix(\tau)=C\sqcup E\sqcup \{p\}$, with $g(C)=4$ and $g(E)=0$. 
By \cite[Corollary 4.3]{ArtebaniSarti} the curve $C$ is hyperelliptic.
Since  $a_{2,8}+a_{5,5}\leq n=1$, then $\alpha=0$, i.e.
$\sigma$ has no fixed curves.
Moreover $a_{4,6}+3a_{2,8}=4$.
The case $a_{4,6}=1$ does not occur since $\Fix(\sigma)$ 
would contain only three isolated points, 
two of them on $E$ by Riemann-Hurwitz formula, 
and no one on $C$, contradicting Lemma \ref{hyp}.
Thus the only possible case is  $(a_{2,8}, a_{3,7}, a_{4,6}, a_{5,5})=(0,1,4,1)$.

\item In this case $\Fix(\tau)= E\sqcup \{p_1,\ldots,p_4\}$, with $g(E)=0$.  
Thus $\alpha$ is either $0$ or $1$.
The latter case can not occur since it would give $a_{3,7}=3$,
while $E$ contains exactly two fixed points.
 Thus $\alpha=0$ and the only solution of \eqref{star} is 
$(a_{2,8}, a_{3,7}, a_{4,6}, a_{5,5})=(1,1,1,0)$.

\item In this case $\Fix(\tau)=C\sqcup E\sqcup \{p_1,\ldots,p_4\}$ with $g(C)=1$ and $g(E)=0$.
By Proposition \ref{elliptic} there are four possible cases D1, D2, D3, D4 for the fixed locus of $\sigma$. 
The number of isolated fixed points of each type  can be computed as before 
by means of Lemma \ref{lemaA}.

\item In this case $\Fix(\tau)=C\sqcup E_1\sqcup E_2\sqcup \{p_1,\ldots,p_4\}$ 
with $g(C)=2$ and $g(E_1)=g(E_2)=0$.
By Proposition \ref{lemaB} the curve $C$ is not fixed by $\sigma$.
Moreover, by Riemann-Hurwitz formula and Lemma \ref{hyp}, 
$C$ contains exactly $4$ fixed points for $\sigma$.
Since $a_{2,8}+a_{5,5}\leq n=4$, then $\alpha$ is either $0$ or $1$.
 If $\alpha=0$, then $E_1,E_2$ should contain two isolated fixed points each.
 By Riemann-Hurwitz formula for $C$ and Lemma \ref{hyp}, 
 it should be $a_{3,7}+a_{4,6}=8$, and there is no solution of    
 \eqref{star} satisfying these conditions.
Thus we can assume $\alpha=1$ and $E_1\subseteq \Fix(\sigma)$.
Since $a_{3,7}+a_{4,6}=6$ the only solution of   \eqref{star} is
$(a_{2,8}, a_{3,7}, a_{4,6}, a_{5,5})=(3,3,3,1)$.

\item In this case $\Fix(\tau)=C \sqcup E_1 \sqcup E_2 \sqcup E_3 \sqcup \{p_1,\ldots,p_4\}$, 
where $g(C)=3$ and $g(E_i)=0$ for $i=1,2,3$.
By Proposition \ref{lemaB}, $C$ is not fixed by $\sigma$ and $\alpha\geq 1$.
Moreover $C$ is hyperelliptic by \cite[Corollary 4.3]{ArtebaniSarti}, thus by 
Riemann-Hurwitz formula and Lemma \ref{hyp} it contains exactly $2$ fixed points.
The cases $\alpha=2$ or $3$ are not possible since in both cases $a_{3,7}$ would be bigger 
than the number of fixed points on the curves $C,E_i$.
If $\alpha=1$, then  $a_{3,7}+a_{4,6}=6$ and 
the only possible solution of \eqref{star}  
is $(a_{2,8}, a_{3,7}, a_{4,6}, a_{5,5})=(3,3,3,1)$.

\item In this case $\Fix(\tau)= E_1\sqcup E_2\sqcup E_3\sqcup E_4\sqcup \{p_1,\ldots,p_7\}$, 
where $g(E_i)=0$ for $i=1,2,3,4$. 
Observe that $\alpha\in\{0,1,2,3,4\}$ 
counts how many curves among the $E_i$'s are fixed by $\sigma$.
We first assume that all the $E_i$'s  are preserved by $\sigma$,
thus $a_{3,7}+a_{4,6}=2(4-\alpha)$.
Under this condition, the system \eqref{star}  
has no solution whenever $\alpha=0,2,3,4$. 

If $\alpha=1$ the only solution of \eqref{star} is
$(a_{2,8}, a_{3,7}, a_{4,6}, a_{5,5})=(3,3,3,1)$ (case G1).  
Now assume that three of the rational curves $E_i$ are permuted by $\sigma$.
The case $\alpha=1$ is incompatible with \eqref{star}, while if $\alpha=0$  
the only solution is  
$(a_{2,8}, a_{3,7}, a_{4,6}, a_{5,5})=(1,1,1,0)$ (case G2).

\item In this case $\Fix(\tau)= C\sqcup E_1\sqcup \dots \sqcup E_4\sqcup \{p_1,\ldots,p_7\}$, 
where $g(C)=1$ and $g(E_i)=0$ for $i=1,2,3,4$. 
By Proposition \ref{elliptic}  $\sigma$ fixes two curves of genus zero and has $14$ isolated points, $3$ of them on $C$ and the others equally distributed on the two rational curves fixed by $\tau$.
Thus $(a_{2,8},a_{3,7}, a_{4,6}, a_{5,5})=(6,5,2,1)$.
\end{enumerate}
In each case the values of $r,l$ can be computed by means of Lemma \ref{lemaA} and using the fact that $r+2l=22-2m$.
The existence part of the statement will be proved in the following section.
\end{proof}

\section{Examples}

We now provide examples for all cases in Table \ref{tabla1}.
As before, $\zeta$ denotes a primitive $9$-th root of unity.

\begin{example}[Case A1 and its descendents]\label{A1}

Let $F_1,F_4\in \C[x_0,x_1]$ be general homogeneous polynomials of degree $1$ and $4$ respectively. 
The following is a smooth $K3$ surface:
 \[
 S=\{F_4(x_0,x_1)+F_1(x_0,x_1)x_2^3+x_2x_3^3=0\}\subset  \P^3
 \]
with the automorphism $\sigma(x_0,x_1,x_2,x_3)=(x_0,x_1,\zeta^6 x_2,\zeta^4 x_3)$.
The fixed locus of $\tau=\sigma^3$ is the union of the curve $C=S\cap\{x_3=0\}$ of genus $3$ and the point $p_1=(0,0,0,1)$. 
Thus $(n,k,g)=(1,1,3)$.
The fixed locus of $\sigma$ contains exactly $6$ points, five of them on the curve $C$ 
(the four roots of $F_4(x_0,x_1)$ and the point $q_1=(0,0,1,0)$), thus we are in {\bf case A1}.
\end{example}

\begin{example}[Case A2 and its descendents]\label{A2}
For a general choice of  the coefficients the following is a $K3$ surface:
\[
S=\{ax_0^2x_1x_2+bx_1^2x_2^2+cx_2^3x_0+dx_1^3x_0+fx_0^4+x_2x_3^3=0\}\subseteq \P^3
\]
which carries the automorphism $\sigma(x_0,x_1,x_2,x_3)=(x_0,\zeta^3 x_1,\zeta^6x_2,\zeta x_3)$. 
The fixed locus of $\tau$ is the union of the curve $C=S\cap\{x_3=0\}$ of genus $3$ and the point $p_1=(0,0,0,1)$,
thus $(n,k,g)=(1,1,3)$.  The fixed locus of $\sigma$ contains $p_1$ and the points $p_2=(0,1,0,0), p_3=(0,0,1,0)$.
Thus we are in {\bf case A2}.

For special values of the coefficients we obtain other examples.
In each case we assume that the coefficients are general with the given assumption.
\begin{enumerate}

\item if $b=0$, the curve $C$ is the union of a smooth cubic $E$ and a line $L$ 
and the surface $S$ has three $A_2$ singularities in $L\cap E$. 
The minimal resolution $\pi:\tilde S\to S$ is a K3 surface and $\sigma$ lifts to 
an  automorphism $\tilde \sigma$ on $\tilde S$.
Each exceptional divisor over the three singular points  contains a fixed point
 for $\tilde\tau$ and $\tilde p_1=\pi^{-1}(p_1)\in\Fix(\tilde\tau)$.
Thus the invariants for $\tilde \tau$ are $(n,k,g)=(4,2,1)$. 
Since the three exceptional divisors of type $A_2$ are permuted by 
$\tilde\sigma$, $\Fix(\tilde\sigma)$ only contains the preimages $\tilde p_i$ 
of the points $p_i$, $i=1,2,3$, thus 
this is an example of {\bf case D4}.

\item if $a=-d-2f, b=f-d, c=d$ 
the curve $C$ acquires three nodes and the surface $S$ has three $A_2$ singularities.
Thus $\tilde\tau$ fixes three points, one in each of the exceptional divisors, 
the point $\tilde p_1$ and the proper transform of $C$, which has genus $0$.
Thus the invariants for $\tau$ are $(n,k,g)=(4,1,0)$ and this is an example of {\bf case C}. 

\item if $c=0$ the curve $C$ acquires a tacnode at $p_3=(0,0,1,0)$, 
which gives a singularity of type $E_6$ of the surface. 
In this case  $\Fix(\tilde\tau)$ contains the proper transform of $C$, which has genus $1$,
a rational curve and three points in the exceptional divisor of type $E_6$, and $\tilde p_1$. Thus $(n,k,g)=(4,2,1)$.
The automorphism $\tilde \sigma$ preserves the exceptional divisor of type 
$E_6$ and thus fixes its central component.  
By Theorem \ref{main}  this is an example of {\bf case D1}.
\end{enumerate}
\end{example}

\begin{example}[Case B and its descendents]\label{B}
Consider the elliptic surface with Weierstrass equation $$y^2=x^3+t(t^3-a)(t^3-b)(t^3-c), \mbox{ for } a,b,c\in\C,$$ 
which carries the automorphism 
$\sigma(x, y, t)=(\zeta^4x,\zeta^6 y,\zeta^3 t)$.
For general $a,b,c\in \C$ this defines a K3 surface. More precisely, if $a,b,c$ are distinct and non-zero, then 
by \cite[Table IV.3.1]{Miranda}  the elliptic fibration has a singular fiber of type $II$ 
for $t=0$, of type $IV$ for $t=\infty$ and nine fibers of type $II$ over the zeroes of $P(t)=(t^3-a)(t^3-b)(t^3-c)$.
The automorphism $\tau$ preserves each fiber of the elliptic fibration. 
The fixed locus of $\tau$ clearly contains the curve $C=\{x=y^2-tP(t)=0\}$, which is hyperelliptic of genus $4$ and is a $2$-section of the fibration, 
and the section at infinity $S_{\infty}$. Moreover, since neither $C$ or $S_{\infty}$ passes through the center $p$ of the fiber of type $IV$,
then $p$ is an isolated fixed point of $\tau$. Thus this gives a family of examples of {\bf case B}.
Observe that $\sigma$ only preserves the fibers over $t=0$ and $t=\infty$. 
This implies that its fixed locus consists of $4$ points in the fiber  over $t=\infty$ (the center $p$ and the points where $S_{\infty}$ and $C$ intersect) and two points on the fiber over $t=0$ (where $S_{\infty}$ and $C$ intersect).

For special values of $a,b,c\in \C$ we obtain other examples:
\begin{enumerate}
\item $a=0$: the fibration has one fiber of type $IV^*$ over $t=0$, $IV$ over $t=\infty$ 
and six fibers of type $II$ over the zeros of $(t^3-b)(t^3-c)$. The fixed locus of $\tau$ in this case contains the curve $C$, which has genus two, the section at infinity $S_{\infty}$, the central component and three isolated points of the fiber of type $IV^*$ and the center of the fiber of type $IV$. Thus this gives a family of examples of {\bf case E}.
\item $b=c$: the fibration has four fibers of type $II$ over $t=0$ and the zeroes of $t^3-a$, and 
four fibers of type $IV$ over $t=\infty$ and the zeroes of $t^3-b$. 
The fixed locus of $\tau$ contains the curve $C$, which has genus one, the section at infinity $S_{\infty}$ and the four centers of the fibers of type $IV$. Thus we are in case $D$. 
The fixed locus of $\sigma$ contains no curves and six isolated points, four on the fiber over $t=\infty$ (the center and the intersection points with $C$ and $S_{\infty}$) and two points on the fiber of type $II$ (where $C$ and $S_{\infty}$ intersect). This gives a family of examples of {\bf case D3}.

\item $c=\infty$: in this case the equation of the fibration is $y^2=x^3+t(t^3-a)(t^3-b)$, so that there are $7$ fibers of type $II$, over $t=0$ and over the zeroes of $(t^3-a)(t^3-b)$, and one fiber of type $II^*$ over $t=\infty$. The fixed locus of $\tau$ contains the curve $C$, which has genus $3$, the section at infinity and two rational curves in the fiber of type $II^*$. Moreover it fixes $4$ points in the fiber of type $II^*$.
Thus we obtain a family of examples of {\bf case F}. 

\item $a=0, b=c$: the fibration has a fiber of type $IV^*$ over $t=0$, of type $IV$ over $t=\infty$ and three fibers of type $IV$ over the zeroes of 
$t^3-b$. Observe that in this case $C$ splits in the union of two sections of the fibration: 
\[
C_1=\{x=y-t^2(t^3-b)=0\},\quad C_2=\{x=y+t^2(t^3-b)=0\}.
\]
The fixed locus of $\tau$ contains the curves $C_1, C_2$, the section at infinity $S_{\infty}$ and the central component of the fiber of type $IV^*$.
Moreover, $\tau$ fixes the centers of the four fibers of type $IV$. 
Observe that $\sigma$ must preserve each component of the fiber over $t=0$, so that it fixes its central component.
Thus we obtain a family of examples of {\bf case G1}.  

\item $a=0, c=\infty$: in this case the equation of the fibration is $y^2=x^3+t^4(t^3-b)$, so that there is a fiber of type $IV^*$ over $t=0$,
of type $II^*$ over $t=\infty$ and three fibers of type $II$ over the zeroes of $t^3-b$.
The fixed locus of $\tau$ contains the curve $C$, which has genus $1$, the section at infinity, two rational curves in the fiber of type $II^*$ and one rational curve in the fiber of type $IV^*$. Moreover it has $4$ fixed points in the fiber of type $II^*$ and $3$ fixed points in the fiber of type $IV^*$.
 Thus we obtain a family of examples of {\bf case H}. 
\end{enumerate}

\end{example}

\begin{example}[Case G2]
Consider the elliptic K3 surface $X$ with Weierstrass equation $y^2=t^4(t^3-b)^2$ (the case (iii) in Example \ref{B}).
An explicit computation shows that, fixed $S_{\infty}$ as zero section, $C_1$ is a $3$-torsion section and $C_2=C_1\oplus C_1$, 
where $\oplus$ denotes the sum in the group law of the  elliptic curve over $\C(t)$ associated to the fibration. 
The translation by the section $C_1$ defines a symplectic automorphism $\varphi$ of order three on $X$. A computation with Magma \cite{Magma} available at \url{https://bit.ly/2FwQ0ZU} gives that
\[
\sigma'(x,y,t)=(\alpha(x,y,t), \beta(x,y,t),t),
\]
where
\[
\alpha(x,y,t)=-2\frac{(t^5+t^2)}{x^2}(y-(t^5+t^2))
\]
\[
\beta(x,y,t)=-4\frac{(t^5+t^2)^2}{x^3}(y-(t^5+t^2))+(t^5+t^2).
\]
Since $\sigma'$  commutes with $\sigma$, as can be checked directly,
then  $\sigma':=\varphi\circ \sigma$ is a non-symplectic automorphism of order $9$ on $X$ such that $(\sigma')^3=\sigma^3=\tau$.
The sections $S_{\infty},C_1,C_2$ are permuted by $\sigma'$. This implies that the three branches of the fiber of type $IV^*$ are also permuted by $\sigma$ while the central component, which is $\sigma'$-invariant, contains two isolated fixed points of $\sigma'$. 
Finally, $\sigma'$ acts on the fiber of type $IV$ permuting the three components and fixing the center.
Thus $\sigma'$ fixes exactly three isolated points, giving an example of {\bf case G2}.
\end{example}

\begin{example}[Case D2]
Consider the elliptic fibration
\[
y^2=x^3+x+t^9+c,\quad c\in\C
\]
with the automorphism $\sigma(x,y,t)=(x,y,\zeta t)$.
The fibration has a singular fiber of type $I_0^*$ over $t=\infty$
and $9$ fibers of type $I_1$ over the zeroes of $t^9+c=0$.
Observe that $\tau$ preserves the fibers over $t=0$ and $t=\infty$, 
thus $\Fix(\tau)$ contains a curve of genus $1$ and the central curve of the fiber $I_0^*$. 
Moreover, the fiber of type $I_0^*$ contains $4$ isolated fixed points for $\tau$, so that the invariants for $\tau$ are $(n,k,g)=(4,2,1)$.
The automorphism $\sigma$ on the fiber over $t=0$ acts as the identity, 
thus this corresponds to {\bf case D2}.  
\end{example}

\begin{remark}
Examples for members of the families B, E, F and for H have been given also by Taki in \cite{Taki}.
The case A1 is missing in \cite[Theorem 1.3]{Taki}, as was also observed in \cite[Remark 6.7]{Brandhorst}.
\end{remark}

\section{Moduli}

In this section we will consider the families of K3 surfaces with a non-symplectic automorphism of order $9$ 
of maximal dimension, i.e. those of type $A_1, A_2$ and $B$.
We show that their moduli space is irreducible and give birational 
maps to moduli spaces of curves with automorphisms.

\begin{proposition}\label{irr}
Let $X$ be a K3 surface with a non-symplectic automorphism $\sigma$ of order $9$ 
such that $\Pic(X)=L^{\tau}$, where $\tau=\sigma^3$. If $\Fix(\sigma)$ is of
\begin{enumerate}
\item  type A1, then up to isomorphism $(X,\langle \sigma\rangle)$ is as in Example \ref{A1},
\item   type A2, then up to isomorphism $(X,\langle\sigma\rangle)$ is as in Example \ref{A2},
\item    type B, then up to isomorphism $(X,\langle\sigma\rangle)$ is as in Example \ref{B}.
\end{enumerate}
\end{proposition}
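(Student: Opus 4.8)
The strategy is, in each case, to use the hypothesis $\Pic(X)=L^\tau$ to produce a $\sigma$-equivariant projective model of $X$, and then to show that the non-symplectic normalisation $\sigma^*\omega_X=\zeta\omega_X$ together with the fixed-locus data of Table \ref{tabla1} rigidifies both the linear $\sigma$-action and the defining equation, forcing it into the shape of the corresponding Example. \emph{Cases A1 and A2.} Here $\Fix(\tau)$ contains a smooth curve $C$ of genus $3$ with $C^2=4$, and since it is the only curve fixed by $\tau$ it is preserved by $\sigma$ (which normalises $\langle\tau\rangle$); thus $[C]\in\Pic(X)=U(3)\oplus A_2$ is a $\sigma$-invariant class. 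First I would prove that $|C|$ is very ample and realises $X$ as a smooth quartic in $\P^3$ with $C$ a plane-quartic hyperplane section: using the shape of $L^\tau$ one checks, via Saint-Donat's analysis of $|C|$ \cite{SD}, that $[C]$ is primitive and that there is no class $E$ with $E^2=0$ and $E\cdot C\in\{1,2\}$, so $|C|$ is base-point free and non-hyperelliptic. Because $\tau$ fixes $C$ pointwise and $C$ spans the hyperplane $\{x_3=0\}$, the induced linear action of $\tau$ on $\P^3$ is trivial on $x_0,x_1,x_2$ and multiplies $x_3$ by $\zeta^3$; hence the $\sigma$-weights of $x_0,x_1,x_2$ are multiples of $3$ and the weight of $x_3$ is $\equiv 1\pmod 3$.

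To pin the weights down I would combine three inputs. Writing $\omega_X=\operatorname{Res}(\Omega/F)$ for the defining quartic $F$, the condition $\sigma^*\omega_X=\zeta\omega_X$ reads $a_0+a_1+a_2+a_3-w_F\equiv 1\pmod 9$, where the $a_i$ are the weights and $w_F$ is the weight of $F$; smoothness forces enough monomials of weight $w_F$ to be present; and the number of fixed points of $\sigma$ on $C$ (five in case A1, two in case A2, by Lemma \ref{lemaA}) selects between a weight vector with a repeated weight on $x_0,x_1,x_2$ and one with three distinct weights. This yields $(a_0,a_1,a_2,a_3)=(0,0,6,4)$ in case A1 and $(0,3,6,1)$ in case A2. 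A direct enumeration of the degree-$4$ monomials of the corresponding weight $w_F\equiv 0$ then shows that $F$ must be a linear combination of exactly the monomials appearing in Example \ref{A1}, respectively Example \ref{A2}; hence $(X,\langle\sigma\rangle)$ is one of those surfaces, the residual coordinate changes commuting with $\sigma$ accounting only for the choice of model (e.g.\ rescaling to normalise the coefficient of $x_2x_3^3$).

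\emph{Case B.} Here $(r,l)=(4,0)$, so $\sigma^*=\id$ on $\Pic(X)=U\oplus A_2$. The summand $U$ provides a $\sigma$-invariant elliptic fibration $\pi:X\to\P^1$ with a section, and $A_2$ a reducible fibre of type $IV$, so I would take a Weierstrass model $y^2=x^3+A(t)x+B(t)$. A fibre-preserving automorphism of an elliptic curve has order dividing $6$, so $\sigma$ cannot preserve the generic fibre; as $\tau=\sigma^3$ does (it fixes the genus-$4$ bisection $C$ pointwise), $\sigma$ acts on the base with order $3$, say $t\mapsto\zeta^3t$, fixing $t=0,\infty$. Equating the weights of $y^2$ and $x^3$ gives $2w_y\equiv 3w_x$, while $\sigma^*\omega_X=\zeta\omega_X$ with $\omega_X=\frac{dx\wedge dt}{y}$ gives $w_x+3-w_y\equiv 1$; these force the unique solution $w_x=4,\ w_y=6$. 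Then $A$ would need weight $\equiv 8\pmod 9$, impossible for a polynomial in $t$ under $t\mapsto\zeta^3t$, so $A\equiv 0$, while $B$ has weight $\equiv 3$, forcing $B(t)=t\,Q(t^3)$ with $\deg Q=3$. This is precisely the family of Example \ref{B}, up to rescaling $t,x,y$ and a check of the singular-fibre types via \cite{Miranda}.

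\emph{Main obstacle.} I expect the delicate point to be the model step in cases A1 and A2: one must use only $\Pic(X)=L^\tau$ to exclude every degenerate possibility in Saint-Donat's description of $|C|$ (a fixed component, base points, and the hyperelliptic and unigonal cases), so that $X$ genuinely embeds as a quartic with $C$ a hyperplane section. Once this model is secured, and once the a priori finite list of admissible weight vectors has been reduced by the residue condition, smoothness, and the fixed-point count, the remaining identification of $F$ with the Example is a direct monomial computation. A secondary point to handle with care is that the coordinate changes commuting with $\sigma$ really suffice to bring a general semi-invariant equation to the exact normal form of the Example, so that no extra moduli survive beyond the two parameters of the families.
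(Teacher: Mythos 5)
Your proposal is correct and follows essentially the same route as the paper: for types A1/A2 the paper likewise uses $|C|$ to realize $X$ as a $\sigma$-invariant smooth quartic in $\P^3$ (citing Proposition 4.9 of Artebani--Sarti instead of re-running Saint-Donat), classifies the possible linearizations of $\sigma$ preserving the hyperplane of $C$ and the residual fixed point, and matches the smooth invariant quartics with Examples \ref{A1} and \ref{A2}; for type B it likewise reduces to a $\sigma$-invariant Weierstrass fibration on which $\sigma$ acts with order three on the base and rigidifies the equation to the family of Example \ref{B}. Your normalizations ($\sigma^*\omega_X=\zeta\omega_X$ with the residue condition and fixed-point counts, and $\sigma^*=\id$ on $\Pic(X)$ from $(r,l)=(4,0)$ in case B) are equivalent substitutes for the paper's classification of order-three projectivities up to powers and its appeal to an intersection-theoretic lemma for the fibration invariance, so the differences are only in execution, not in the underlying argument.
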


\begin{proof}
We first assume that $\tau=\sigma^3$ has invariants $(n,k,g)=(1,1,3)$.
Let $C$ be the smooth genus three curve in the fixed locus of $\tau$.
By \cite[Proposition 4.9]{ArtebaniSarti} the linear system $|C|$ 
defines an embedding $\varphi_C:X\to \P^3$. 
After identifying $X$ with its image in $\P^3$ we can assume that
\[
\tau(x_0,x_1,x_2,x_3)=(x_0,x_1,x_2,\zeta^3 x_3).
\]
Since $C$ is invariant for $\sigma$, then  
$\sigma$ is given by a projectivity of $\P^3$ which 
 leaves invariant the hyperplane $x_3=0$  and the point $p=(0,0,0,1)$.
An order three automorphism of $H\cong\P^2$ is exactly one of the following up to a coordinate change 
 and up to taking powers:
 \[
 f_1(x_0,x_1,x_2)=(x_0,x_1,\zeta^3 x_2),\quad f_2(x_0,x_1,x_2)=(x_0,\zeta^3 x_1,\zeta^{6} x_2).
 \]
 This implies that, up to coordinate changes and up to taking powers, $\sigma$ is one of the following:
 \[
 \sigma_{1,k}(x_0,x_1,x_2,x_3)=(x_0,x_1,\zeta^3 x_2,\zeta^k x_3),\  \sigma_{2}(x_0,x_1,x_2,x_3)=(x_0,\zeta^3 x_1,\zeta^{6}x_2,\zeta x_3),
 \]
 where $k=1,2$.
 Analyzing each of these automorphisms we obtain that the only ones which allow a smooth homogeneous 
 invariant polynomial of degree $4$ are  $\sigma_{1,2}$ and $\sigma_{2}$.
 In case the automorphism is $\sigma_{1,2}$ we find that the family of smooth invariant polynomials 
 is the one in Example \ref{A1};  in case it is  $\sigma_{2}$ the family of smooth invariant polynomials 
 is the one in Example \ref{A2}.
 
 We now assume that $\sigma$ is of type $B$. 
 By \cite[Proposition 4.2]{ArtebaniSarti} and its proof, $X$ has an elliptic fibration $\pi:X\to \P^1$ with Weierstrass form
 \[
 y^2=x^3+p(t),
 \]
where $\deg(p)=10$ and in this model $\tau(x,y,t)=(\epsilon x,y,t)$. 
The fibration has a reducible fiber of type $IV$ over $t=\infty$ 
and $10$ singular fibers of type $II$ over the zeroes of $p$.
An argument similar to the one in the proof of Lemma \ref{B} shows
that $\sigma$ preserves this fibration, using \cite[Lemma 5]{ArtebaniSarti4}.  
Observe that $\sigma$ induces an order three automorphism on the basis of the fibration 
with two fixed points, one of them in $t=\infty$.
Up to a coordinate change in $t$ we can assume that $\sigma$ acts as $t\mapsto \epsilon t$ on the basis of $\pi$.
This implies that up to a constant $p(t)=t(t^3-a)(t^3-b)(t^3-c)$ for distinct $a,b,c\in \C$, which gives the family in Example \ref{B}.
 \end{proof}
 
Let $(X,\sigma)$ be a K3 surface with a non-symplectic automorphism of order $9$ 
such that $\sigma^*(\omega_X)=\zeta\omega_X$,
fix an isometry  $\varphi: H^2(X,\Z)\to L_{K3}$,  let $\rho=\varphi^{-1}\sigma^*\varphi$, 
 $\iota=\rho^3$, $M\subseteq L_{K3}$ be the fixed lattice of $\iota$ and $N$ be its orthogonal complement.
By \cite[\S 11]{DK} the moduli space of $(M,\rho)$-polarized K3 surfaces is isomorphic 
to a complex ball quotient $\mathcal D/\Gamma$ where
\[
\mathcal D=\{z\in \P(V): \langle z,\bar z\rangle >0\}\cong \mathbb B_{d},\quad 
\Gamma=\{\gamma\in O(N): \gamma \circ \rho=\rho \circ \gamma\}
\]
with $V = \{z\in N\otimes \C: \rho(z)=\zeta z\}$ and $d=\dim(V)-1=\frac{m}{3}-1$.
 The points in the moduli space which correspond to $(M,\rho)$-ample polarized K3 surfaces,
i.e. such that $\rho$ is induced by an automorphism of the surface, 
belong to an open subset defined as the quotient of the 
complement of a divisor $\Delta$ in $\mathcal D$ \cite[Lemma 11.5]{DK}.

Given a general pair $(X,\sigma)$ as in Example \ref{A1},
 we will denote  by  ${\mathcal A_1}$ the corresponding moduli space of $(M,\rho)$-polarized K3 surfaces.
Similarly we define  ${\mathcal A_2}$ and ${\mathcal B}$ as the moduli spaces corresponding 
to the general members of the families of type $A2$ and $B$. By Theorem \ref{main} and Proposition \ref{irr} 
we obtain the following.

\begin{corollary}
The moduli space of $K3$ surfaces with a non-symplectic automorphism of order $9$ 
has three irreducible components of maximal 
dimension $2$: ${\mathcal A_1},  {\mathcal A_2}$ and ${\mathcal B}$. 
\end{corollary}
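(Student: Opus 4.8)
The plan is to read the three components directly off the classification of Theorem \ref{main}, combining the ball quotient description of \cite[\S 11]{DK} with the dimension formula $d=\frac{m}{3}-1$ recalled above. The whole argument is organized around the single numerical invariant $m$.

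First I would observe that the moduli space of K3 surfaces carrying a non-symplectic automorphism of order $9$ splits, according to the isomorphism class of the pair $(M,\rho)$ attached to $(X,\sigma)$, into the finitely many families indexed by the cases of Theorem \ref{main}. For each such case the construction of \cite[\S 11]{DK} identifies the locus of $(M,\rho)$-ample polarized surfaces with the quotient $(\mathcal D\setminus\Delta)/\Gamma$ of the complement of a divisor in a complex ball $\mathcal D\cong\mathbb B_d$. Since $\mathbb B_d$ is connected, its quotient by $\Gamma$ is irreducible, and deleting the divisor $\Delta$ from an irreducible variety preserves irreducibility; hence each of these families is irreducible of dimension $d=\dim(V)-1=\frac{m}{3}-1$.

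Next I would run through the value of $m$ appearing in Table \ref{tabla1} (equivalently Table \ref{tau}). The possible values $m\in\{3,6,9\}$ yield dimensions $d\in\{0,1,2\}$, and $d=2$ occurs exactly when $m=9$, namely in the cases A1, A2 and B. Every remaining case has $m\le 6$ and hence dimension at most $1$, so no family outside A1, A2, B can be a component of maximal dimension. Finally I would check that $\mathcal A_1$, $\mathcal A_2$, $\mathcal B$ are three genuinely distinct irreducible components: by Proposition \ref{irr} each is nonempty, with a general member equal to the family of Example \ref{A1}, \ref{A2}, \ref{B} respectively, so all three exist, and since they have equal dimension $2$ none can be contained in another nor in any further (nonexistent) higher-dimensional component.

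The step I expect to be most delicate is the distinctness check. Case B is easily separated from the others, since its invariant lattice $L^\tau=U\oplus A_2$ differs from the lattice $U(3)\oplus A_2$ of cases A1 and A2. The subtle point is that A1 and A2 share this same fixed lattice $L^\tau$, so they can only be told apart by the finer datum of how $\rho$ acts on $M$; this is exactly what the invariant $r=\rk(L^\sigma)$ records, which equals $4$ in case A1 and $2$ in case A2. Thus the $\rho$-invariant sublattice $L^\sigma\subseteq M$ has different rank in the two cases, the two actions $\rho$ are not conjugate, and $\mathcal A_1\neq\mathcal A_2$ as components of the moduli space. Together with the dimension count this shows that $\mathcal A_1,\mathcal A_2,\mathcal B$ are precisely the irreducible components of maximal dimension $2$.
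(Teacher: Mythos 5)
Your overall route is the same as the paper's: read the corollary off from the ball--quotient description of \cite[\S 11]{DK} together with the dimension formula $d=\frac{m}{3}-1$, the classification in Theorem \ref{main}, and Proposition \ref{irr}. Your dimension count ($d=2$ exactly when $m=9$, i.e.\ in cases A1, A2, B of Table \ref{tabla1}) and your distinctness check ($L^\tau$ separates B from A1 and A2, while $r=\rk (L^{\sigma})$, equal to $4$ for A1 and $2$ for A2, separates those two) are correct, and in fact spell out details that the paper's one-line proof leaves implicit.

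There is, however, one genuine gap, located in your very first step. You assert that the moduli space ``splits, according to the isomorphism class of the pair $(M,\rho)$, into the finitely many families indexed by the cases of Theorem \ref{main}.'' The cases of Theorem \ref{main} are distinguished by fixed-locus data and the numerical invariants $(r,l)$, \emph{not} by the conjugacy class of the action $\rho$ on $L_{K3}$; a priori a single case of the table, say A1, could be realized by several non-conjugate actions $\rho$, and would then contribute several distinct two-dimensional ball-quotient components with identical invariants. Your subsequent arguments (pairwise distinctness of the three named components, nonemptiness) do not exclude this, so as written you only prove that there are \emph{at least} three components of maximal dimension $2$. Closing this gap is precisely the role of Proposition \ref{irr}: the general member of any two-dimensional component satisfies $\Pic(X)=L^{\tau}$, and the proposition says that every such pair $(X,\langle\sigma\rangle)$ of type A1 (resp.\ A2, B) is isomorphic to a member of the single connected projective family of Example \ref{A1} (resp.\ Example \ref{A2}, Example \ref{B}), so each case yields exactly one irreducible component. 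You do cite Proposition \ref{irr}, but only to obtain nonemptiness; its essential content here is this uniqueness statement. Once it is used that way, your argument is complete and coincides with the paper's.
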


\begin{remark}
The moduli space $\mathcal A_1$ is birational  to the moduli space $\mathcal M_3^3$
of genus three curves $C$ carrying a cyclic automorphism $f$ of order $3$ 
with $C/(f)\cong \P^1$. The isomorphism is given by the map
\begin{equation}\label{iso}
(X,\langle\sigma\rangle)\mapsto (C,\sigma_{|C}),
\end{equation}
where $C$ is the fixed curve of $\tau=\sigma^3$.
Conversely, given a pair $(C,f)$ as above, 
observe that $C$ is not hyperelliptic by Lemma \ref{hyp}, 
since it contains $5$ fixed points for $f$.
Thus $C$ can be embedded in $\P^2$ as a  smooth 
plane quartic and $f$ is induced by an order three automorphism of $\P^2$.
Up to a change of coordinates we can assume
\[
f(x_0,x_1,x_2)=(x_0,x_1,\epsilon x_2).
\]
Let $Y$ be the triple cover of $\P^2$ branched along $C$ and $2L$,
where $L=\{x_2=0\}$ is the line fixed by $f$.
The normalization of $Y$ is a K3 surface $X$ with an order nine automorphism $\sigma$, 
obtained  lifting $f$, which gives a pair $(X,\langle \sigma\rangle)$ in $\mathcal A_1$.

Similarly one can show that $\mathcal A_2$ is birational to the moduli space 
of non-hyperelliptic curves of genus three $C$ with a cyclic automorphism $f$ of order $3$ 
with $g(C/(f))=1$. 

Finally, the map \eqref{iso} defines an isomorphism between $\mathcal B$ and the moduli space 
of hyperelliptic curves of genus $4$ with a cyclic automorphism $f$ of order $3$.
Observe that one such curve $C$ can be defined by an equation of the form $y^2-t(t^3-as^3)(t^3-bs^3)(t^3-cs^3)=0$ 
with distinct $a,b,c\in \C$ in $\P(1,1,5)$ and in these coordinates $f(s,t,y)=(s,\epsilon t,\epsilon^2 y)$.
The triple cover of $\P(1,1,5)$ branched along $C$ is birational to a K3 surface with an order nine automorphism 
which gives a point in $\mathcal B$.
\end{remark}

\bibliographystyle{plain}
\bibliography{biblio9}

\end{document}